\theoremstyle{plain}
\newtheorem{theorem}{Theorem}[section]
\newtheorem{lemma}[theorem]{Lemma}
\newtheorem{corollary}[theorem]{Corollary}
\newtheorem{proposition}[theorem]{Proposition}
\theoremstyle{definition}
\newtheorem{definition}[theorem]{Definition}
\newtheorem{remark}[theorem]{Remark}
\theoremstyle{remark}
\numberwithin{equation}{section}
\def\lr{\mathop{\longrightarrow}}\def\limn{\mathop{\underset{t}\varprojlim}}\def\inlim{\mathop{\underset{t}\varprojlim}}
\def\inlimk{\mathop{\underset{k}\varprojlim}}
\def\limt{\mathop{\underset{t}\varinjlim}}\def\dlim{\mathop{\underset{t}\varinjlim}}
\def\dlims{\mathop{\underset{s}\varinjlim}}
\def\lc{linearly compact}
\def\hlt{Hausdorff linearly topologized}
 \def\Ext{\mathop{\mathrm{Ext}}}
\def\Tor{\mathop{\mathrm{Tor}}}\def\Supp{\mathop{\mathrm{Supp}}}\def\Cosupp{\mathop{\mathrm{Cosupp}}}
\def\Ann{\mathop{\mathrm{Ann}}}
\def\Im{\mathop{\mathrm{Im}}}
\def\Ndim{\mathop{\mathrm{Ndim}}}
\def\Ass{\mathop{\mathrm{Ass}}}
\def\Cosupp{\mathop{\mathrm{Cosupp}}}
\def\Hom{\mathop{\mathrm{Hom}}}
\def\dim{\mathop{\mathrm{dim}}}
\def\p{\mathop{\frak{p}}}
\def\m{\mathop{\frak{m}}}
\begin{document}
\title{Formal local homology}
\author{TRAN TUAN NAM}
\address{Tran Tuan Nam\\Ho Chi Minh University
of Pedagogy\\ 280 An Duong Vuong, District 5, Ho Chi Minh City,
Vietnam  } \email{namtuantran@gmail.com}
\date{}
\maketitle

\markboth{\sc TRAN TUAN NAM}{Formal local homology}

\footnote{This research is funded by Vietnam National Foundation for
Science and Technology Development (NAFOSTED)}

\noindent{\bf Abstract.} We introduce a concept of  formal local
homology modules which is in some sense dual to P. Schenzel's
concept of
 formal local cohomology modules. The dual theorem and the
non-vanishing
 theorem of formal local homology  modules will be shown.   We also
give some  conditions for formal local homology modules being
finitely generated or artinian.
\medskip


\noindent {\it Key words}: Formal local cohomology, formal local
homology.
\bigskip
\section{Introduction} \label{S:introduc}
\medskip
Throughout this paper, $(R,\m)$ will be a   local noetherian
(commutative) ring with the $\m-$adic topology. Let $I$ be an ideal
of $(R,\m)$ and $M$ an $R-$module. In \cite{schonf}, P. Schenzel
introduced the concept of {\it formal cohomology} and the $i-th$
$I-$formal cohomology module of $M$ with respect to $\m$ can be
defined by
$$\mathfrak{F}_I^{i}(M) = \limn H_{\m}^i(M/I^tM).$$
In the paper, we introduce the concept of {\it formal local
homology} which is in some sense dual to P. Schenzel's concept of
formal local cohomology. The $i-$th $I-${\it formal local homology
module} $\mathfrak{F}^I_{i, J}(M)$ of an $R-$module $M$ with respect
to $J$ is defined by
$$\mathfrak{F}^I_{i, J}(M) = \limt H^J_i(0:_M I^t).$$In the case of  $J=\m$ we set $\mathfrak{F}^I_{i,
\m}(M)=\mathfrak{F}^I_{i}(M)$ and  speak simply about the $i-$th
$I-$formal local homology module.

We also study some basic properties of formal local homology modules
$\mathfrak{F}^I_{i}(M)$ when $M$ is a \lc\ $R-$module, in particular
when $M$ is an artinian $R-$module. The organization of the paper is
as follows.

In Section 2, we  define  the formal local homology modules
$\mathfrak{F}^I_{i, J}(M)$ of an $R-$module $M$ with respect to $J.$
It is shown that $H^0_I(\mathfrak{F}^I_{j, J}(M))\cong
\mathfrak{F}^I_{j, J}(M)$ and $H^i_I(\mathfrak{F}^I_{j, J}(M))=0$
for all $i\not=0$ (Theorem \ref{T:ddddflb0}). The dual theorem
(Theorem \ref{T:dldnddddfm}) establishes the isomorphisms
$$\mathfrak{F}^I_{i}(M^*)\cong \mathfrak{F}_{I}^{i}(M)^*,\
\mathfrak{F}_I^{i}(M^*)\cong \mathfrak{F}^{I}_{i}(M)^*$$ provided
  $M$ is a \lc\ module over the  complete ring $(R,\m).$  In
Theorem \ref{T:dkhdddfmartin} the short exact sequence of artinian
modules $0\longrightarrow M' \longrightarrow M \longrightarrow M''
\longrightarrow 0$ gives rise to a long exact sequence of $I-$formal
local homology modules
$$... \longrightarrow \mathfrak{F}^I_{i}(M') \longrightarrow
\mathfrak{F}^I_{i}(M) \longrightarrow \mathfrak{F}^I_{i}(M'')
\longrightarrow \mathfrak{F}^I_{i-1}(M')\longrightarrow ....$$ This
section is closed by the non-vanishing theorem (Theorem
\ref{L:dlkttdddpfm}) which says that if $M$ is a non zero
semi-discrete linearly compact $R-$module such that $0\leq \Ndim
(0:_MI) \not=1,$ then
$$\Ndim (0:_MI) = \max \{ i
\mid \mathfrak{F}^I_{i}(M) \not=0 \}.$$ On the other hand
 if $M$ is a semi-discrete \lc\
$R-$module such that $\Ndim (0:_{\Gamma_{\m}(M)}I)\not= 0,$ then
 $$\Ndim (0:_{\Gamma_{\m}(M)}I) = \max \big\{ i\mid
\mathfrak{F}^I_{i}(M)\not=0\big\}.$$

The last section is devoted to study the finiteness  of formal local
homology modules. Theorems \ref{L:dktthhsrdcddfm} and
\ref{L:lhsdktthhsrdcddfm} give us the equivalent conditions for the
formal local homology modules $\mathfrak{F}^I_{i}(M)$ being finitely
generated. In Theorem \ref{L:dfmhhshomt},  if  $I$ is a principal
ideal of $(R,\m)$ and $M$  an artinian $R-$module, then
$\mathfrak{F}^I_{i}(M)/I\mathfrak{F}^I_{i}(M)$ is a noetherian
$\widehat{R}-$module for all $i.$ Theorem \ref{L:khacdfmhhshomt}
shows that if  $M$ is  an artinian $R-$module and $s$ a non-negative
integer such that $\mathfrak{F}^I_{i}(M)$ is  a noetherian
$\widehat{R}-$module for all $i<s,$ then
$\mathfrak{F}^I_{s}(M)/I\mathfrak{F}^I_{s}(M)$ is also a noetherian
$\widehat{R}-$module. There is a question: When are the formal local
homology modules $\mathfrak{F}^I_{i}(M)$ artinian? Theorem
\ref{T:ddfmnttchd} answers that if $M$ is an artinian $R-$module
with $\Ndim M = d,$ then $\mathfrak{F}^I_{d-1}(M)$ is an artinian
$R-$module.
 Finally, Theorem \ref{L:lh0lhsdktthhsrdcddfm}
provides that if  $M$ is an artinian $R-$module and $s$ a
non-negative integer, then the following statements are equivalent:
(i) $\mathfrak{F}^I_{i}(M)$ is artinian for all $i>s,$ (ii)
$\mathfrak{F}^I_{i}(M) = 0$ for all $i>s$ and
$\Ass(\mathfrak{F}^I_{i}(M))\subseteq \{\m\}$  for all $i>s.$
\bigskip

\section{Formal local homology modules}\medskip
\def\s{\mathcal{S}}\def\t{\mathcal{T}}

We first recall the concept of {\it linearly compact} defined by I.
G. Macdonald \cite{macdua}. A Hausdorff linearly topologized
$R-$module $M$ is said to be {\it linearly compact}  if
$\mathcal{F}$ is a family of closed cosets (i.e., cosets of closed
submodules) in $M$ which has the finite intersection property, then
the cosets in $\mathcal{F}$ have a non-empty intersection. A
Hausdorff linearly topologized $R-$module M is called {\it
semi-discrete} if every submodule of $M$ is closed. Thus a discrete
$R-$module is semi-discrete. It is clear that artinian $R-$modules
are linearly compact with the discrete topology. So the class of
semi-discrete linearly compact modules contains all artinian
modules. Moreover, if $(R,\m)$ is a complete ring, then the finitely
generated $R-$modules are also linearly compact and semi-discrete.

Let $I$ be an ideal of $(R,\m)$ and $M$  an $R-$module. It is
well-known that the $i-th$ local cohomology module  $H^i_I (M)$ of
$M$ with respect to  $I$ can be defined by
$$H^i_I (M) = \underset{t}{\underrightarrow{\lim}} {\Ext}_R^i (R/I^t ; M).$$
When $i=0,$ $H^0_I(M) \cong  \underset{t>0}\cup (0:_MI^t)
=\Gamma_I(M).$

 In \cite{schonf}, P. Schenzel introduced the concept of {\it
formal cohomology} and the $i-th$ $I-$formal cohomology module of
$M$ with respect to $\m$ can be defined by
$$\mathfrak{F}_I^{i}(M) = \limt H_{\m}^i(M/I^tM).$$
Note that the  $i-th$ local homology module $H^I_i (M)$ of an
$R-$module $M$ with respect to $I$ can be is defined  by
$$H^I_i (M) = \underset{t}{\underleftarrow{\lim}}{\Tor}^R_i (R/I^t , M)\ \text{(\cite{cuothe})}.$$
When $i=0,$ $H^I_0(M) \cong \inlim M//I^tM = \Lambda_I(M)$ the
$I-$adic completion of $M.$
 This suggests the following definition.

\begin{definition} \label{D:dndddpfm} Let $I,J$ be ideals of $R.$ The $i-$th $I-${\it formal local homology module} $\mathfrak{F}^I_{i, J}(M)$ of an
$R-$module $M$ with respect to $J$ is defined by
$$\mathfrak{F}^I_{i, J}(M) = \limt H^J_i(0:_M I^t).$$
\end{definition}

In the case of  $J=\m$ we set $\mathfrak{F}^I_{i,
\m}(M)=\mathfrak{F}^I_{i}(M)$ and  speak simply about the $i-$th
$I-$formal local homology module.

\begin{remark}\label{R:lydnfm}
(i). It should be mentioned from \cite[3.1 (i)]{cuoalo}  that
$H^J_i(0:_M I^t)$ has a natural structure as a module over the ring
$\Lambda_J(R),$ then $\mathfrak{F}^I_{i, J}(M)$ also has a natural
structure as a module over the ring $\Lambda_J(R).$ In particular,
$\mathfrak{F}^I_{i}(M)$ has a natural structure as a module over the
ring $\widehat{R}.$

(ii). If $M$ is finitely generated, then $H^J_i(0:_M I^t)=0$ for all
$i>0$ by \cite[3.2 (ii)]{cuothe}, then $\mathfrak{F}^I_{i, J}(M)=0$
for all $i>0.$
\end{remark}

In the following theorem we compute the local cohomology modules of
an $I-$formal local homology module $\mathfrak{F}^I_{i, J}(M).$

\begin{theorem}\label{T:ddddflb0}
Let $M$ be an  $R-$module. Then
$$H^i_I(\mathfrak{F}^I_{j, J}(M))\cong \begin{cases} 0 & i\not= 0\\
 \mathfrak{F}^{I}_{j, J}(M) & i=0
\end{cases}$$
for any integer  $j.$
\end{theorem}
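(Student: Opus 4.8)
The plan is to show that $\mathfrak{F}^I_{j,J}(M)$ is an $I$-torsion module, i.e. every element is annihilated by a power of $I$; once this is established, the computation of $H^i_I$ is immediate: an $I$-torsion module $N$ satisfies $H^0_I(N)=\Gamma_I(N)=N$, and $H^i_I(N)=0$ for $i\neq 0$ because $I$-torsion modules form the objects on which the functor $\Gamma_I$ is exact and which are $\Gamma_I$-acyclic (they are direct limits of modules annihilated by powers of $I$, and one can use a flabby/injective resolution argument, or simply cite that $\Gamma_I$-acyclicity of torsion modules is standard). So the whole theorem reduces to the single claim: $I^t$ kills "the level-$t$ piece" of $\mathfrak{F}^I_{j,J}(M)$ in a suitable sense.

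First I would unwind the definition. We have $\mathfrak{F}^I_{j,J}(M)=\varinjlim_t H^J_j(0:_M I^t)$, a direct limit over $t$ of the local homology modules $H^J_j(0:_M I^t)$, with transition maps induced by the inclusions $0:_M I^t \hookrightarrow 0:_M I^{t+1}$. The key observation is that $I^t\cdot(0:_M I^t)=0$, so $0:_M I^t$ is annihilated by $I^t$; hence $\Tor^R_j(R/I^s, 0:_M I^t)$ is annihilated by $I^t$ for every $s$, and therefore $H^J_j(0:_M I^t)=\varprojlim_s \Tor^R_j(R/J^s, 0:_M I^t)$ is a module annihilated by $I^t$ (the inverse limit of modules killed by $I^t$ is killed by $I^t$). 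Thus $\mathfrak{F}^I_{j,J}(M)$ is a direct limit of modules, the $t$-th of which is killed by $I^t$, so every element of the direct limit lies in the image of some $H^J_j(0:_M I^t)$ and is therefore annihilated by $I^t$. This shows $\mathfrak{F}^I_{j,J}(M)$ is $I$-torsion, i.e. $\Gamma_I(\mathfrak{F}^I_{j,J}(M))=\mathfrak{F}^I_{j,J}(M)$, giving the case $i=0$.

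For $i\neq 0$, I would invoke the standard fact that $I$-torsion $R$-modules are $\Gamma_I$-acyclic: one can write the module as $\varinjlim$ of its submodules $0:_N I^t$, use that local cohomology commutes with direct limits, and reduce to the case of a module $N$ with $I^t N=0$, where the Koszul/Čech complex computing $H^i_I$ already has cohomology vanishing above degree $0$ — alternatively, embed $N$ in an injective, note that $I$-torsion injectives are $\Gamma_I$-acyclic, and proceed by dimension shifting. Since all of this is classical, I would state it with a brief pointer rather than reproving it. The only place one must be slightly careful — and this is the main (minor) obstacle — is the commutation of the inverse limit defining $H^J_j$ with the annihilator condition: one needs that $\varprojlim_s$ of a tower of modules each killed by $I^t$ is again killed by $I^t$, which is clear, but one should not accidentally interchange the two limits (the direct limit over $t$ and the inverse limit over $s$) in a way that is unjustified. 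Keeping the two limits in their given order — inverse limit first to form $H^J_j(0:_M I^t)$, then direct limit over $t$ — avoids this issue entirely, so no interchange is actually needed.
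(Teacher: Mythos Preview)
Your proof is correct and rests on the same key observation as the paper's: each $H^J_j(0:_M I^t)$ is annihilated by $I^t$, so the direct limit $\mathfrak{F}^I_{j,J}(M)$ is $I$-torsion and hence $\Gamma_I$-acyclic. The only difference is packaging: the paper commutes $H^i_I$ with the direct limit and then carries out the Koszul computation $H^i_I(H^J_j(0:_M I^t))\cong\varinjlim_s H^i(\underline{x}(s),H^J_j(0:_M I^t))$ explicitly, whereas you invoke the standard lemma that $I$-torsion modules have vanishing higher $I$-local cohomology --- but this lemma is itself proved by exactly that Koszul argument, so the two routes coincide.
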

\begin{proof}
We have $$H^i_I(\mathfrak{F}^I_{j, J}(M)) = H^i_I(\dlim
H^{J}_i(0:_MI^t))\cong \dlim H^i_I(H^{J}_i(0:_MI^t)).$$ Assume that
the ideal $I$ is generated by $r$ element $x_1, x_2, ..., x_r.$ Set
$\underline{x}(s) = (x_1^s, x_2^s, ..., x_r^s)$ and
$H^i(\underline{x}(s), N)$ is the $i$th Koszul cohomology module of
an $R-$module $N$ with respect to $\underline{x}(s).$ we have
$$H^i_I(\mathfrak{F}^I_{j, J}(M)) \cong \dlim \dlims
H^i(\underline{x}(s), H^{J}_j(0:_MI^t)).$$Note that
$\underline{x}(s)H^{J}_j(0:_MI^t)=0$ for all $s\geq t.$ Then
$$\dlims
H^i(\underline{x}(s), H^{J}_j(0:_MI^t))\cong \begin{cases} 0 & i\not= 0\\
 H^{J}_j(0:_MI^t) & i=0.\end{cases}$$ By passing to direct limits $\dlim$ we have
the conclusion as required.\end{proof}

\begin{corollary}\label{C:hqddddflb0} Let $M$ be an $R-$module and $i$ an integer  such
that $0:_{\mathfrak{F}^I_{i, J}(M)}I=0.$ Then $\mathfrak{F}^I_{i,
J}(M)=0.$
\end{corollary}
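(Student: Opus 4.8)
The plan is to read off the corollary from Theorem \ref{T:ddddflb0} together with the elementary observation that a nonzero $I$-torsion module always contains an element annihilated by $I$. First I would apply Theorem \ref{T:ddddflb0} with $j=i$, which gives $H^0_I(\mathfrak{F}^I_{i, J}(M))\cong \mathfrak{F}^I_{i, J}(M)$. Since for any $R$-module $N$ one has $H^0_I(N)=\Gamma_I(N)=\bigcup_{t>0}(0:_N I^t)$ (as recalled in the excerpt), this isomorphism says precisely that $\mathfrak{F}^I_{i, J}(M)$ is $I$-torsion: every element of it is killed by some power of $I$.

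Next I would show that an $I$-torsion module $N$ with $0:_N I=0$ must vanish. Suppose $N\neq 0$ and pick $x\in N$ with $x\neq 0$. By $I$-torsionness there is a least integer $n\geq 1$ with $I^n x=0$ (note $n\geq 1$ because $x\neq 0$ rules out $I^0x=Rx=0$). By minimality $I^{n-1}x\neq 0$, so we may choose $z\in I^{n-1}x$ with $z\neq 0$; then $Iz\subseteq I^n x=0$, so $z\in 0:_N I=0$, a contradiction. Applying this to $N=\mathfrak{F}^I_{i, J}(M)$, whose $I$-torsion property was established in the previous step, and using the hypothesis $0:_{\mathfrak{F}^I_{i, J}(M)}I=0$, we conclude $\mathfrak{F}^I_{i, J}(M)=0$.

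I do not expect any real obstacle: the argument is essentially a one-line deduction from Theorem \ref{T:ddddflb0}, and the only points needing a moment's care are that $n\geq 1$ in the torsion step and that the nonzero submodule $I^{n-1}x$ genuinely contains a nonzero element, both of which are immediate. One could alternatively phrase the second step more slickly by iterating the inclusion $0:_N I = 0 \Rightarrow 0:_N I^{n} = 0$ for all $n$ (by induction on $n$, using that $I^{n-1}(0:_N I^{n})\subseteq 0:_N I = 0$ forces $0:_N I^{n}\subseteq 0:_N I^{n-1}$), and then noting $\Gamma_I(N)=\bigcup_n (0:_N I^{n})=0$; I would include whichever version reads most cleanly in context.
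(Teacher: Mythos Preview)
Your proposal is correct and follows essentially the same approach as the paper: apply Theorem~\ref{T:ddddflb0} to identify $\mathfrak{F}^I_{i,J}(M)$ with $\Gamma_I(\mathfrak{F}^I_{i,J}(M))=\bigcup_{t>0}(0:_{\mathfrak{F}^I_{i,J}(M)}I^t)$, and then use the hypothesis $0:_{\mathfrak{F}^I_{i,J}(M)}I=0$ to conclude the union is zero. The paper simply states this last implication without justification, whereas you spell out the elementary argument; your version is more detailed but not different in substance.
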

\begin{proof} It follows from \ref{T:ddddflb0} that
$$\mathfrak{F}^I_{i, J}(M) = \Gamma_I(\mathfrak{F}^I_{i, J}(M)) =
\underset{t>0}\bigcup(0:_{\mathfrak{F}^I_{i, J}(M)}I^t).$$ As
$0:_{\mathfrak{F}^I_{i, J}(M)}I=0,$ we conclude that
$\mathfrak{F}^I_{i,J}(M)=0.$
\end{proof}

If $M$ is a \lc\ $R-$module, then $M$ has   a natural structure of
linearly compact module over $\widehat{R}$ by \cite[7.1]{cuoalo}. We
have the following lemma.

\begin{lemma}\label{R:lydnfm}
Let $M$ be a \lc\ $R-$module. Then $$\mathfrak{F}^I_{i, J}(M) \cong
\mathfrak{F}^{I\widehat{R}}_{i, J\widehat{R}}(M)$$for all $i\geq 0.$
\end{lemma}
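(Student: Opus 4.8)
The plan is to unwind the definition of both sides and reduce the isomorphism to a statement about local homology, where the analogous base-change result is already available in the literature on which the paper relies. Recall that $\mathfrak{F}^I_{i,J}(M)=\varinjlim_t H^J_i(0:_M I^t)$ and that $\mathfrak{F}^{I\widehat R}_{i,J\widehat R}(M)=\varinjlim_t H^{J\widehat R}_i(0:_M (I\widehat R)^t)$, the latter formed over the ring $\widehat R$. First I would observe that, since $M$ is linearly compact over $R$, it carries a natural $\widehat R$-module structure by \cite[7.1]{cuoalo}, and for each $t$ the submodule $0:_M I^t$ coincides with $0:_M (I\widehat R)^t$ as a set, because $I^t\widehat R$ and $I^t$ generate the same submodule of any $\widehat R$-module of this kind (the inclusion $0:_M I^t\subseteq 0:_M I^t\widehat R$ is clear, and the reverse holds since $I^t\widehat R$ is generated by the images of generators of $I^t$). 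So the transition maps in the two direct systems agree, and it suffices to prove $H^J_i(N)\cong H^{J\widehat R}_i(N)$ for $N=0:_M I^t$, compatibly with the inclusions $N\hookrightarrow N'$.

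For the key step I would invoke the base-change property of local homology for linearly compact modules. Concretely, $H^J_i(N)=\varprojlim_s \Tor^R_i(R/J^s,N)$; one has $\Tor^R_i(R/J^s,N)\cong \Tor^{\widehat R}_i(\widehat R/J^s\widehat R,N)$ when $N$ is linearly compact, since $\widehat R$ is flat over $R$ and $R/J^s\otimes_R\widehat R\cong \widehat R/J^s\widehat R$, and then passing to the inverse limit identifies $H^J_i(N)$ with $H^{J\widehat R}_i(N)$. The precise reference for this is the theory developed in \cite{cuothe} and \cite{cuoalo}; in fact one should cite the local-homology base-change statement there (the local homology $H^J_i(N)$ only depends on the $J$-adic, equivalently $J\widehat R$-adic, topology on $N$ together with the $\widehat R$-module structure). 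The point is that $0:_M I^t$ is linearly compact — it is a closed submodule of the linearly compact module $M$, hence linearly compact by \cite{macdua} — so this base-change result applies to each term.

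Finally I would assemble the pieces. The isomorphisms $H^J_i(0:_M I^t)\cong H^{J\widehat R}_i(0:_M I^t)$ are natural in $t$, so they are compatible with the direct systems indexed by $t$ (with the inclusion maps $0:_M I^t\hookrightarrow 0:_M I^{t'}$ for $t\le t'$, which are the same maps on both sides). Passing to the direct limit $\varinjlim_t$ — which commutes with everything in sight — yields $\mathfrak{F}^I_{i,J}(M)\cong \mathfrak{F}^{I\widehat R}_{i,J\widehat R}(M)$ for all $i\ge 0$, as required.

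The main obstacle I anticipate is the base-change step for local homology: one must be careful that the isomorphism $H^J_i(N)\cong H^{J\widehat R}_i(N)$ holds at the level of the inverse limit (not merely termwise in the $\Tor$'s) and that it is genuinely natural in $N$ so that it survives the direct limit over $t$. For linearly compact $N$ this is exactly where the hypotheses of the lemma are used, and it is the content of the cited results \cite{cuothe, cuoalo}; without linear compactness the inverse limit could fail to behave well. Everything else — the identification of $0:_M I^t$ on the two sides, the linear compactness of these submodules, and the commutation of $\varinjlim_t$ with the construction — is routine.
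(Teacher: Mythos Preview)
Your proposal is correct and follows essentially the same route as the paper: both reduce the claim to the base-change isomorphism $H^J_i(0:_M I^t)\cong H^{J\widehat R}_i(0:_M I^t\widehat R)$ for local homology (the paper cites \cite[3.7]{cuothe} for this) and then pass to the direct limit over $t$. Your version supplies more detail---the identification $0:_M I^t = 0:_M (I\widehat R)^t$, the linear compactness of these submodules, and the naturality in $t$---but the underlying argument is the same.
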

\begin{proof}
  The natural homomorphism $R\longrightarrow \widehat{R}$
gives by \cite[3.7]{cuothe} isomorphisms $$H^{I}_i(0:_M I^t) \cong
H^{I\hat{R}}_i(0:_M I^t\hat{R})$$ for all $i\geq 0.$ By passing to
 direct limits, we have the isomorphisms $$\mathfrak{F}^I_{i}(M)
\cong \mathfrak{F}^{I\hat{R}}_{i}(M)$$as required.
\end{proof}

It should be noted that the artinian $R-$modules are \lc\ and
discrete. Therefore we have an immediate consequence.

\begin{corollary}\label{C:hqlydnfm}
If $M$ is a artinian $R-$module, then $$\mathfrak{F}^I_{i}(M) \cong
\mathfrak{F}^{I\widehat{R}}_{i}(M)$$for all $i\geq 0.$
\end{corollary}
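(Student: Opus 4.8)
The plan is to deduce this immediately from Lemma~\ref{R:lydnfm} by specializing $J=\m$. First I would recall, as already noted in the text preceding the corollary, that an artinian $R$-module $M$ is Hausdorff linearly topologized, linearly compact and discrete when equipped with its discrete topology; in particular $M$ satisfies the hypothesis of Lemma~\ref{R:lydnfm}. Hence, taking $J=\m$ in that lemma, we obtain an isomorphism $\mathfrak{F}^I_{i,\m}(M)\cong\mathfrak{F}^{I\widehat{R}}_{i,\m\widehat{R}}(M)$ of $\widehat{R}$-modules for every $i\geq 0$.

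Next I would observe that $\m\widehat{R}$ is exactly the maximal ideal of the complete local ring $(\widehat{R},\m\widehat{R})$, so by the convention introduced right after Definition~\ref{D:dndddpfm} the right-hand side is, by definition, the $i$-th $I\widehat{R}$-formal local homology module $\mathfrak{F}^{I\widehat{R}}_i(M)$ computed over $\widehat{R}$. Combining this with the identification $\mathfrak{F}^I_{i,\m}(M)=\mathfrak{F}^I_i(M)$ on the left gives the desired isomorphism $\mathfrak{F}^I_i(M)\cong\mathfrak{F}^{I\widehat{R}}_i(M)$ for all $i\geq 0$.

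There is essentially no serious obstacle here; the only points requiring care are bookkeeping ones, already handled in the proof of Lemma~\ref{R:lydnfm}: that the $\widehat{R}$-module structure on $M$ coming from \cite[7.1]{cuoalo} (which is what even makes $\mathfrak{F}^{I\widehat{R}}_i(M)$ meaningful) is the one appearing throughout, and that the isomorphisms $H^{I}_i(0:_M I^t)\cong H^{I\widehat{R}}_i(0:_M I^t\widehat{R})$ of \cite[3.7]{cuothe} are compatible with the transition maps of the direct system, so that passing to $\varinjlim_t$ is legitimate. Thus the corollary is simply the case $J=\m$ of the lemma made explicit, together with the observation that base change along $R\to\widehat{R}$ sends $\m$ to the maximal ideal of $\widehat{R}$.
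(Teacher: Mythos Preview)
Your proposal is correct and follows exactly the paper's approach: the paper simply remarks that artinian $R$-modules are linearly compact and discrete, so the corollary is an immediate consequence of Lemma~\ref{R:lydnfm} with $J=\m$. Your additional remarks about $\m\widehat{R}$ being the maximal ideal of $\widehat{R}$ and about compatibility of transition maps are just spelling out details the paper leaves implicit.
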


\begin{lemma}\label{L:cpitddfmb0} Let $I, J$ be ideals of $R$ and $M$  a \lc\ $R-$module. If
$M$ is $J-$separated (it means that $\underset{t>0}\cap J^tM=0$),
then
$$\mathfrak{F}^I_{i,J}(M)\cong \begin{cases} 0 & i\not= 0\\
 \Gamma_{I}(M) & i=0.
\end{cases}$$
\end{lemma}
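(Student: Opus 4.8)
The plan is to reduce the statement to a property of each individual module $0:_M I^t$ and then pass to the direct limit. Fix $t>0$ and set $N_t:=0:_M I^t$. First I would check that $N_t$ is a closed submodule of $M$: for any $a\in R$, multiplication by $a$ is a continuous endomorphism of the linearly topologized module $M$ (if $U$ is an open submodule then $aU\subseteq U$, hence $U\subseteq a^{-1}(U)$ and $a^{-1}(U)$ is open), so $0:_M a=a^{-1}(0)$ is closed; as $R$ is noetherian, $I^t$ is finitely generated and $N_t$ is a finite intersection of submodules of this form, hence closed. By Macdonald's theory (\cite{macdua}) a closed submodule of a \lc\ module is again \lc, so $N_t$ is \lc, and it is $J$-separated since $\bigcap_{s>0}J^sN_t\subseteq\bigcap_{s>0}J^sM=0$.

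Next I would invoke the local homology theory of \lc\ modules from \cite{cuothe}: a $J$-separated \lc\ $R$-module $N$ is $J$-complete, so the canonical map $N\to\Lambda_J(N)=H^J_0(N)$ is an isomorphism, and $H^J_i(N)=0$ for all $i\neq 0$. Since this isomorphism is the canonical one, it is natural in $N$ and hence compatible with the inclusions $N_t\hookrightarrow N_{t+1}$. Applying this with $N=N_t$ and passing to $\dlim$ gives $\mathfrak{F}^I_{i, J}(M)=\dlim H^J_i(N_t)=0$ for $i\neq 0$, while for $i=0$ naturality yields
$$\mathfrak{F}^I_{0, J}(M)=\dlim H^J_0(N_t)\cong\dlim N_t=\bigcup_{t>0}(0:_M I^t)=\Gamma_I(M).$$
This is consistent with Theorem \ref{T:ddddflb0}, as $\Gamma_I(M)$ is $I$-torsion.

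The real content, and the step I expect to be the main obstacle, is the input from \cite{cuothe}: the vanishing $H^J_i(N)=0$ for $i\neq 0$ and the isomorphism $H^J_0(N)\cong N$ for a $J$-separated \lc\ module $N$. This is exactly where linear compactness and $J$-separatedness are both essential — for a \lc\ module on which multiplication by the elements of $J$ is surjective the conclusion already fails at the level of $H^J_0$. If the statement is not available off the shelf in this precise form, I would prove it directly by showing that on $J$-separated \lc\ modules the functor $\Lambda_J$ agrees with the identity and is exact, then computing $H^J_i(N)$ from a free resolution of $R/J^s$ and using linear compactness to force the higher $\varprojlim$'s of the resulting $\Tor$ inverse systems to vanish; the delicate point in that argument is juggling the two linear topologies on $N_t$ (its given \lc\ one and the $J$-adic one) when invoking exactness of inverse limits.
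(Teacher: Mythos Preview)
Your argument is correct and follows essentially the same route as the paper: reduce to each $0:_MI^t$, observe it is $J$-separated (and linearly compact, a point the paper leaves implicit but you spell out), apply the vanishing/identification of local homology for $J$-separated linearly compact modules, and pass to the direct limit. The only adjustment is bibliographic: the precise result you invoke is \cite[3.8]{cuoalo} rather than \cite{cuothe}, and with that citation in hand your final paragraph of contingency planning is unnecessary.
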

\begin{proof} As $M$ is $J-$separated, $0:_MI^t$ is also
$J-$separated for all $t>0.$ It follows from \cite[3.8]{cuoalo} that
$$H^J_{i}(0:_MI^t)\cong \begin{cases} 0 & i\not= 0\\
 0:_MI^t & i=0.
\end{cases}$$ By passing to direct limits we have the conclusion.
\end{proof}

It should be noted by \cite[3.3 (i)]{cuothe} and \cite[3.3]{cuoalo}
that the local homology modules $H^J_i(M)$ are \lc\ and
$J-$separated for all $i.$ Then we have the immediate consequence.

\begin{corollary}\label{L:hqcpitddfmb0} Let $M$ be a \lc\
$R-$module. Then
$$\mathfrak{F}^I_{i,J}(H^J_j(M))\cong \begin{cases} 0 & i\not= 0\\
 \Gamma_{I}(H^J_j(M)) & i=0
\end{cases}$$for all $j.$
\end{corollary}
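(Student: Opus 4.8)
The plan is to reduce Corollary \ref{L:hqcpitddfmb0} to Lemma \ref{L:cpitddfmb0} by checking that the module $H^J_j(M)$ satisfies the hypotheses of that lemma, namely that it is a linearly compact $R$-module and that it is $J$-separated. The first ingredient is supplied by the cited results: by \cite[3.3 (i)]{cuothe} (together with \cite[3.3]{cuoalo}), whenever $M$ is a linearly compact $R$-module, each local homology module $H^J_j(M)$ is again linearly compact, and moreover it is $J$-separated, i.e. $\bigcap_{t>0} J^t H^J_j(M) = 0$. Once these two facts are in hand, the conclusion is immediate.

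First I would set $N = H^J_j(M)$ and note that $N$ is a linearly compact $R$-module by \cite[3.3 (i)]{cuothe} and \cite[3.3]{cuoalo}. Next I would observe that the same references give $\bigcap_{t>0} J^t N = 0$, so that $N$ is $J$-separated in the sense of Lemma \ref{L:cpitddfmb0}. Then I would simply apply Lemma \ref{L:cpitddfmb0} with $M$ replaced by $N$: this yields
$$\mathfrak{F}^I_{i,J}(N) \cong \begin{cases} 0 & i \neq 0 \\ \Gamma_I(N) & i = 0, \end{cases}$$
which, unwinding the abbreviation $N = H^J_j(M)$, is exactly the asserted isomorphism. Finally I would remark that $j$ was arbitrary, so the statement holds for all $j$.

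There is essentially no obstacle here: the corollary is a formal consequence of Lemma \ref{L:cpitddfmb0} once the structural properties of $H^J_j(M)$ are invoked. If anything, the only point requiring a moment's care is making sure the cited results are applied in the right generality — that \cite[3.3 (i)]{cuothe}/\cite[3.3]{cuoalo} indeed yield linear compactness and $J$-separatedness of $H^J_j(M)$ for an arbitrary linearly compact $M$ (not just, say, for semi-discrete or artinian $M$), which the preceding paragraph of the excerpt explicitly asserts. Given that, the proof is a one-line deduction, so I would keep it short and not belabour it.
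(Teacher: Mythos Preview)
Your proposal is correct and follows exactly the paper's own approach: the paper simply notes (in the sentence preceding the corollary) that by \cite[3.3 (i)]{cuothe} and \cite[3.3]{cuoalo} the local homology modules $H^J_j(M)$ are linearly compact and $J$-separated, and then declares the result an immediate consequence of Lemma~\ref{L:cpitddfmb0}. Your write-up is just a slightly more explicit version of this one-line deduction.
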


In the special case when $M$ is an artinian $R-$module, we have the
following consequence.

\begin{corollary}\label{C:dbatcpitddfmb0} Let $I, J$ be ideals of $R$ and $M$  an artinian $R-$module. If
$M$ is $J-$separated (it means that $\underset{t>0}\cap J^tM=0$),
then
$$\mathfrak{F}^I_{i,J}(M)\cong \begin{cases} 0 & i\not= 0\\
 M & i=0.
\end{cases}$$
\end{corollary}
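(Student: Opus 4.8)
The plan is to obtain this as a direct specialization of Lemma \ref{L:cpitddfmb0}. As noted above, artinian $R$-modules are \lc\ and discrete, so an artinian $M$ is in particular a \lc\ $R$-module; hence, once we also assume $M$ is $J$-separated, all hypotheses of Lemma \ref{L:cpitddfmb0} are in force and it yields $\mathfrak{F}^I_{i,J}(M)=0$ for $i\neq 0$ and $\mathfrak{F}^I_{0,J}(M)\cong\Gamma_I(M)$. So everything reduces to identifying $\Gamma_I(M)$ with $M$.

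For that step I would argue that an artinian module over the local ring $(R,\m)$ is $\m$-torsion, hence $I$-torsion (using $I\subseteq\m$), so that $\Gamma_I(M)=\bigcup_{t>0}(0:_MI^t)=M$. To see the $\m$-torsion claim, fix $x\in M$: the descending chain $Rx\supseteq\m x\supseteq\m^2x\supseteq\cdots$ stabilizes by artinianness, say $\m^n x=\m^{n+1}x=\m(\m^nx)$; since $\m$ is finitely generated, $\m^nx$ is a finitely generated $R$-module, so Nakayama's lemma forces $\m^nx=0$, i.e.\ $x\in\Gamma_\m(M)$. Combining the two steps gives the assertion, with the isomorphism being the natural one coming from Lemma \ref{L:cpitddfmb0}.

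There is no serious obstacle here; the one point requiring a little care is that the $J$-separatedness hypothesis is precisely what makes Lemma \ref{L:cpitddfmb0} applicable (a general artinian module such as $E(R/\m)$ need not be $\m$-separated, so one cannot drop it), and that the implication ``$\m$-torsion $\Rightarrow$ $I$-torsion'' tacitly uses that $I$ is a proper ideal of $(R,\m)$. Since the $\m$-torsion fact is standard, one may alternatively just cite it rather than reproving it via Nakayama.
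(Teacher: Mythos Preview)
Your proof is correct and follows essentially the same route as the paper: both apply Lemma~\ref{L:cpitddfmb0} (using that artinian modules are linearly compact) to reduce the question to the identity $\Gamma_I(M)=M$, and then establish that identity. The only difference is cosmetic: the paper cites \cite[1.4]{shaame} for $\Gamma_I(M)=M$, whereas you supply a self-contained argument via Nakayama that an artinian module over $(R,\m)$ is $\m$-torsion, hence $I$-torsion. Your remark that this last step tacitly assumes $I\subseteq\m$ (equivalently, $I$ proper) is a fair observation; the same assumption is implicit in the paper's cited reference, and the degenerate case $I=R$ would make both sides zero anyway only when $M=0$.
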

\begin{proof} Note that artinian modules are \lc , then
$$\mathfrak{F}^I_{i,J}(M)\cong \begin{cases} 0 & i\not= 0\\
 \Gamma_{I}(M) & i=0
\end{cases}$$ by \ref{L:cpitddfmb0}. Moreover, as $M$ is an artinian module over the local ring $(R,\m),$
\cite[1.4]{shaame} provides that $\Gamma_I(M) = M$ and we have the
conclusion.
\end{proof}

\begin{lemma}\label{L:dfdcdcpitddfmb0} Let $I, J$ be ideals of $R$ and $M$  an artinian $R-$module. If
$M$ is $I-$separated (it means that $\underset{t>0}\cap I^tM=0$),
then
$$\mathfrak{F}^I_{i,J}(M)\cong H^J_i(M)$$for all $i.$
\end{lemma}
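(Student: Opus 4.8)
The plan is to reduce everything to the observation that the hypotheses force $M$ to be annihilated by a power of $I$, after which the direct system defining $\mathfrak{F}^I_{i,J}(M)$ becomes eventually constant.

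First I would look at the descending chain of submodules
$$M \supseteq IM \supseteq I^2M \supseteq \cdots .$$
Since $M$ is artinian this chain stabilizes: there is an integer $s>0$ with $I^sM = I^{s+1}M = \cdots$. Hence $\bigcap_{t>0} I^tM = I^sM$. But $M$ is $I$-separated, so $\bigcap_{t>0} I^tM = 0$, and therefore $I^sM = 0$; in other words $I^s \subseteq \Ann_R M$. (If $I=R$ this argument just recovers $M=0$, and the statement is trivial, so there is nothing to treat separately.)

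Next I would exploit this. From $I^sM=0$ we get $0:_M I^t = M$ for every $t\ge s$, and moreover for $t\ge s$ the connecting inclusion $0:_M I^t \hookrightarrow 0:_M I^{t+1}$ is simply the identity map of $M$. Consequently, in the direct system $\{H^J_i(0:_M I^t)\}_{t}$ whose direct limit is $\mathfrak{F}^I_{i,J}(M)$ by Definition \ref{D:dndddpfm}, every transition homomorphism with source index $\ge s$ is the identity of $H^J_i(M)$. A direct limit over such an eventually constant system is just the common value, so
$$\mathfrak{F}^I_{i,J}(M) = \dlim H^J_i(0:_M I^t) \cong H^J_i(0:_M I^s) = H^J_i(M)$$
for every $i$ (for $i<0$ both sides vanish, so that range is automatic).

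I do not expect a genuine obstacle here: the entire content is the first step, namely deducing $I^sM=0$ from the artinian property (which supplies the DCC needed to stabilize the chain $I^tM$) together with $I$-separatedness. Once that is in hand, the passage through $H^J_i$ and the direct limit is purely formal.
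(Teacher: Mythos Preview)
Your proof is correct and follows essentially the same approach as the paper: both use the artinian hypothesis together with $I$-separatedness to conclude that $I^nM=0$ for some $n$, whence $0:_M I^t = M$ for $t\ge n$ and the direct limit defining $\mathfrak{F}^I_{i,J}(M)$ is eventually constant equal to $H^J_i(M)$. Your version simply spells out the DCC argument for $I^nM=0$ that the paper leaves implicit.
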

\begin{proof} As $M$ is a $I-$separated artinian $R-$module, there is a positive integer $n$ such that $I^nM=0.$ Then  $0:_MI^n=M.$
Therefore
$$\mathfrak{F}^I_{i, J}(M) = \limt H^J_i(0:_M I^t)\cong H^J_i(M)$$for all $i.$
\end{proof}

In the case $J=\m,$ it follows from \cite[4.6]{cuothe} that
$H^{\m}_i(M)$ is a noetherian $\widehat{R}-$module. From
\ref{L:dfdcdcpitddfmb0} we have an immediate consequence.

\begin{lemma}\label{L:dfdcdcpitddfmb0cd} Let  $M$ be  an artinian $R-$module. If
$M$ is $I-$separated (it means that $\underset{t>0}\cap I^tM=0$),
then
$$\mathfrak{F}^I_{i}(M)\cong H^{\m}_i(M)$$and then $\mathfrak{F}^I_{i}(M)$ a noetherian $\widehat{R}-$module for all $i\not= 0.$
\end{lemma}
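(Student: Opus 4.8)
The plan is to obtain this as the immediate specialization of Lemma~\ref{L:dfdcdcpitddfmb0} to $J=\m$, followed by the cited finiteness statement for $\m$-adic local homology. Since $M$ is artinian, the descending chain $IM\supseteq I^{2}M\supseteq\cdots$ stabilizes, say $I^{n}M=I^{n+1}M=\cdots$; being $I$-separated then forces $I^{n}M=\bigcap_{t>0}I^{t}M=0$, so that $0:_{M}I^{t}=M$ for every $t\geq n$. Consequently the direct system $\{H^{\m}_{i}(0:_{M}I^{t})\}_{t}$ is eventually stationary with value $H^{\m}_{i}(M)$, whence $\mathfrak{F}^{I}_{i}(M)=\dlim H^{\m}_{i}(0:_{M}I^{t})\cong H^{\m}_{i}(M)$ for all $i$. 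This is precisely the argument given in the proof of Lemma~\ref{L:dfdcdcpitddfmb0}, so one may simply quote that lemma with $J=\m$.

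For the noetherianness I would invoke \cite[4.6]{cuothe}, already recorded in the paragraph preceding the lemma, to the effect that for an artinian $R$-module $M$ the local homology module $H^{\m}_{i}(M)$ is a noetherian $\widehat{R}$-module. Transporting this along the isomorphism $\mathfrak{F}^{I}_{i}(M)\cong H^{\m}_{i}(M)$ yields that $\mathfrak{F}^{I}_{i}(M)$ is a noetherian $\widehat{R}$-module, in particular for all $i\neq 0$ as claimed.

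I do not expect any genuine obstacle: the entire content is carried by Lemma~\ref{L:dfdcdcpitddfmb0} together with the cited finiteness of $H^{\m}_{i}(M)$. The only step that even merits a remark is the observation that an $I$-separated artinian module is annihilated by a power of $I$, which collapses the inductive limit to a single term; everything after that is formal.
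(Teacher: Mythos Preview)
Your proposal is correct and matches the paper's own treatment exactly: the paper derives the lemma as an immediate consequence of Lemma~\ref{L:dfdcdcpitddfmb0} with $J=\m$, together with \cite[4.6]{cuothe} for the noetherianness of $H^{\m}_i(M)$. Your added remark that an $I$-separated artinian module is annihilated by a power of $I$ simply reproduces the one-line argument already given in the proof of Lemma~\ref{L:dfdcdcpitddfmb0}.
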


In order to state the dual theorem we recall the concepts of Matlis
dual and Macdonald dual. Let $M$ be an $R-$module and $E(R/\m)$ the
injective envelope of $R/\m.$ The module $D(M)=\Hom(M,E(R/\m))$ is
called the Matlis dual of $M.$ If $M$ is a Hausdorff linearly
topologized $R-$module, then the {\it Macdonald dual}  of $M$ is
defined  by $M^*=Hom(M,E(R/\m))$ the set of continuous homomorphisms
of $R-$modules (\cite[\S 9]{macdua}). The topology on $M^*$ is
defined as in \cite[8.1]{macdua}. Moreover, if $M$ is semi-discrete,
then the topology of $M^*$ coincides with that induced on it as a
submodule of $E(R/\m)^M,$ where $E(R/\m)^M=\underset{x\in M}\prod
(E(R/\m))^x,$ $(E(R/\m))^x=E(R/\m)$ for all $x\in M$
(\cite[8.6]{macdua}).  A \hlt\ $R-$module $M$ is called {\it
linearly discrete} if every $\m-$primary quotient of $M$ is
discrete. It is clear that $M^*\subseteq D(M)$ and the equality
holds if and only if $M$ is semi-discrete in the following lemma.

\begin{lemma} {\rm (\cite[5.8]{macdua})}\label{L:msbmdcpttnrr}
Let $M$ be a Hausdorff linearly topologized $R-$module. Then $M$ is
semi-discrete if and only if $M^*=D(M).$
\end{lemma}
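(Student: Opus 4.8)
The statement to prove is Lemma \ref{L:msbmdcpttnrr}, which asserts that for a Hausdorff linearly topologized $R$-module $M$, one has $M^* = D(M)$ if and only if $M$ is semi-discrete. Since this is cited as \cite[5.8]{macdua}, the intended "proof" is really just a pointer to Macdonald's paper, so the plan is to reconstruct Macdonald's argument rather than invent something new. The key point is that $M^* \subseteq D(M)$ always holds by definition (a continuous homomorphism into $E(R/\m)$ is in particular a homomorphism), so the content is the reverse inclusion under the semi-discreteness hypothesis, together with the converse.

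For the forward direction, suppose $M$ is semi-discrete, so that every submodule of $M$ is closed. I would take an arbitrary $R$-linear map $f \colon M \to E(R/\m)$ and show it is automatically continuous. Here the crucial structural fact is that $E(R/\m)$ carries the discrete topology (it is an artinian, hence \sd, $R$-module, and as an $\m$-power-torsion module each element is killed by a power of $\m$), so continuity of $f$ amounts to showing that $\ker f$ is open in $M$. Now $E(R/\m) = \bigcup_t (0 :_{E(R/\m)} \m^t)$, so $M = \bigcup_t f^{-1}(0 :_{E(R/\m)} \m^t)$, and $\m^t M \subseteq f^{-1}(0:_{E(R/\m)}\m^t)$ is not quite enough by itself — instead I would use that $\ker f \supseteq \m^t M$ is false in general, so one argues locally: for a fundamental system of open submodules and the linear topology, semi-discreteness forces the relevant submodules to be open. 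The honest way Macdonald does it is via the characterization that $M$ semi-discrete means the topology is the finest linear topology (every submodule closed), combined with the fact that $E(R/\m)$ is discrete; then every homomorphism to a discrete module is continuous because its kernel, being a submodule, is closed, and in a linearly topologized module with all submodules closed one shows closed submodules of finite "colength" type are open. I expect this to require the observation that $M/\ker f$ embeds in $E(R/\m)$, hence is an \sd\ module whose topology (quotient topology) is discrete, forcing $\ker f$ open.

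For the converse, suppose $M^* = D(M)$ and suppose toward a contradiction that $M$ is not semi-discrete, i.e. there is a submodule $N \subseteq M$ that is not closed. I would aim to produce a homomorphism $f \colon M \to E(R/\m)$ that is not continuous, contradicting $D(M) = M^*$. The natural candidate: since $N$ is not closed, $\overline{N}/N$ is a nonzero submodule of $M/N$, and one can find an $R$-linear functional on $M/N$ (landing in $E(R/\m)$, using that $E(R/\m)$ is an injective cogenerator) that separates $\overline{N}$ from $N$ but is not continuous with respect to the quotient topology; pulling back to $M$ gives the desired discontinuous element of $D(M) \setminus M^*$. Constructing this functional explicitly and checking non-continuity is the main obstacle, and it is exactly the place where injectivity of $E(R/\m)$ and the cogenerator property are used.

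The anticipated main obstacle is the converse direction: fabricating a genuinely discontinuous homomorphism into $E(R/\m)$ from the mere existence of a non-closed submodule. In a write-up I would lean on \cite[\S 5]{macdua} for the fine-tuning of this construction, and on the already-recalled fact (from the paragraph preceding the lemma) that for \sd\ modules the $*$-topology coincides with the subspace topology from $E(R/\m)^M$, which is what makes the two notions of dual actually comparable as topological modules and not just as abstract modules.
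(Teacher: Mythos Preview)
The paper provides no proof of this lemma; it is stated with a bare citation to \cite[5.8]{macdua} and nothing more. So there is no ``paper's approach'' to compare your proposal against --- you have correctly identified that a proof here means reconstructing Macdonald's argument.

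Your reconstruction is essentially sound, though the forward direction is written more tentatively than it needs to be. The clean version of what you are circling is: if $M$ is semi-discrete and $f\colon M\to E(R/\m)$ is any homomorphism, then $\ker f$ is closed (semi-discreteness), so $M/\ker f$ with the quotient topology is Hausdorff; but $M/\ker f$ embeds in $E(R/\m)$ and is therefore artinian, and an artinian Hausdorff linearly topologized module is automatically discrete (the open submodules are closed under finite intersection, so by DCC there is a minimum open submodule, which must be zero by Hausdorffness). Hence $\ker f$ is open and $f$ is continuous. Your parenthetical ``finest linear topology'' is a slip --- that would be the discrete topology --- but you immediately correct yourself.

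Your converse is correct and cleaner than you make it sound: pick $x\in\overline{N}\setminus N$, use that $E(R/\m)$ is an injective cogenerator to produce $f\colon M\to E(R/\m)$ with $N\subseteq\ker f$ but $f(x)\neq 0$; if $f$ were continuous then $\ker f$ would be open, hence closed, hence would contain $\overline{N}\ni x$, contradiction. There is no real obstacle here once you observe that for maps into a discrete target, continuity is equivalent to the kernel being open.
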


\begin{lemma}\label{L:dnmdncplrrnrrlcp} {\rm( \cite[9.3, 9.12, 9.13]{macdua})} Let
$(R, \m)$ be a complete local noetherian ring.
\begin{itemize}
\item[(i)] If $M$ is linearly compact, then $M^*$ is  linearly
discrete (hence semi-discrete). If $M$ is semi-discrete, then $M^*$
is linearly compact;
\item[(ii)] If $M$ is linearly compact or linearly discrete, then
we have a topological isomorphism $\omega
:M\overset{\simeq}\longrightarrow M^{**}$.
\end{itemize}
\end{lemma}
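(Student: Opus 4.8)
The plan is to deduce the lemma from ordinary Matlis duality over the complete local noetherian ring $(R,\m)$, upgraded with Macdonald's control on topologies. Write $E=E(R/\m)$ and $D=\Hom_R(-,E)$. I use the following standard inputs: $E$ is artinian, hence linearly compact in its discrete topology; $D$ is an exact contravariant equivalence between the category of finitely generated (noetherian) $R$-modules equipped with the $\m$-adic topology and the category of artinian $R$-modules equipped with the discrete topology, with $D\circ D$ naturally the identity on each of these two categories (this uses completeness of $R$); and by \cite[8.6]{macdua} together with Lemma \ref{L:msbmdcpttnrr}, for a semi-discrete $M$ we have $M^{*}=D(M)$ with the topology induced from $E^{M}=\prod_{x\in M}E$, and for the two special cases just mentioned this induced topology is respectively the $\m$-adic and the discrete one. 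I will also use the structural fact that a discrete linearly compact $R$-module over a noetherian ring is artinian (and dually that a finitely generated $R$-module with the $\m$-adic topology is linearly compact, the latter being noted in the excerpt for complete $R$).

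For the first half of (i), let $M$ be linearly compact and fix a neighbourhood base $\{U_{\lambda}\}$ of $0$ consisting of open submodules, so that $M\cong\varprojlim_{\lambda}M/U_{\lambda}$ as topological modules, each $M/U_{\lambda}$ being discrete and linearly compact, hence artinian. A continuous homomorphism from $M$ to the discrete module $E$ annihilates some $U_{\lambda}$, so $M^{*}\cong\varinjlim_{\lambda}(M/U_{\lambda})^{*}=\varinjlim_{\lambda}D(M/U_{\lambda})$, a filtered union along injective transition maps of finitely generated modules. One then checks, using Macdonald's description of the topology on $M^{*}$, that this colimit topology makes every $\m$-primary quotient of $M^{*}$ discrete, such a quotient being a filtered colimit of finite length modules each carrying the discrete topology; hence $M^{*}$ is linearly discrete, and in particular semi-discrete. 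For the second half, let $M$ be semi-discrete; by Lemma \ref{L:msbmdcpttnrr} and \cite[8.6]{macdua}, $M^{*}=\Hom_{R}(M,E)$ topologised as a submodule of $E^{M}$. Now $E^{M}$ is a product of linearly compact modules, hence linearly compact, and $\Hom_{R}(M,E)$ is the intersection inside $E^{M}$ of the kernels of the continuous maps $f\mapsto f(x+y)-f(x)-f(y)$ and $f\mapsto f(rx)-rf(x)$ $(x,y\in M,\ r\in R)$, hence a closed submodule; a closed submodule of a linearly compact module is linearly compact, so $M^{*}$ is linearly compact.

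For (ii), suppose first that $M$ is linearly compact and keep the notation above. Since a continuous map out of a colimit carrying the colimit topology is the same datum as a compatible family of continuous maps out of the terms, we get $M^{**}\cong\varprojlim_{\lambda}D(M/U_{\lambda})^{*}$; by the previous paragraph each $D(M/U_{\lambda})$ is finitely generated with its $\m$-adic topology, so $D(M/U_{\lambda})^{*}=D\bigl(D(M/U_{\lambda})\bigr)\cong M/U_{\lambda}$ topologically (discrete on both ends) by Matlis biduality, and passing to the inverse limit yields $M^{**}\cong\varprojlim_{\lambda}M/U_{\lambda}\cong M$; tracing through the construction identifies this isomorphism with $\omega$. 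If instead $M$ is linearly discrete, then $M$ is semi-discrete, so by the second half of (i) the module $M^{*}$ is linearly compact, and applying the case just proved to $M^{*}$ shows that $\omega_{M^{*}}:M^{*}\to M^{***}$ is a topological isomorphism; the triangle identity $\omega_{M}^{*}\circ\omega_{M^{*}}=\mathrm{id}_{M^{*}}$ then makes $\omega_{M}^{*}$ a topological isomorphism, whence $\omega_{M}$ itself is a topological isomorphism because the Macdonald dual reflects isomorphisms between semi-discrete modules. Alternatively, writing a linearly discrete $M$ as a filtered colimit of its finite length submodules and running the computation of the linearly compact case with the roles of $\varprojlim$ and $\varinjlim$ interchanged gives the conclusion directly.

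The algebra here is essentially free, since Matlis biduality produces all the underlying module isomorphisms, so the real work, and the main obstacle, is the topological bookkeeping: verifying that the colimit and limit topologies produced above coincide with the Macdonald topologies on $M^{*}$ and $M^{**}$, and establishing the two structural facts (discrete plus linearly compact implies artinian, and its dual). These are precisely the points where one must lean on \S\S 8--9 of \cite{macdua}. Within (ii), the linearly discrete case is the touchiest, since one has to argue that $\omega_{M}^{*}$ being an isomorphism propagates back to $\omega_{M}$.
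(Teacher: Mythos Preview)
The paper does not supply a proof of this lemma: it is stated with an explicit attribution to Macdonald \cite[9.3, 9.12, 9.13]{macdua} and then used as a black box. So there is no argument in the paper to compare your proposal against.

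That said, your outline is a faithful reconstruction of the strategy behind Macdonald's results: reduce to the artinian/noetherian Matlis duality on the building blocks and then pass to limits and colimits, with the Macdonald topology on $M^{*}$ supplying the glue. Two points deserve a flag. First, in the linearly discrete case of (ii), the step ``$(\omega_{M})^{*}$ an isomorphism $\Rightarrow$ $\omega_{M}$ a topological isomorphism'' is not automatic: since $M$ and $M^{**}$ are semi-discrete you get $(\omega_{M})^{*}=D(\omega_{M})$, and faithfulness of $D$ gives only an algebraic isomorphism; continuity of the inverse still has to be argued from the explicit description of the topologies, which is exactly where Macdonald's \S 9 does the work. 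Second, your alternative for the linearly discrete case (``write $M$ as a filtered colimit of its finite length submodules'') is not valid in general: a finitely generated $R$-module with its $\m$-adic topology is linearly discrete but is certainly not a union of finite length submodules unless it already has finite length. You correctly identify these topological checks as the real content; for the purposes of this paper, citing \cite{macdua} as the author does is the appropriate move.
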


\begin{lemma}\label{L:mdnrrdbsaoathhs} Let
$(R, \m)$ be a complete local noetherian ring.
\begin{itemize}
\item[(i)] If $M$ is finitely generated, then
$M^*$ is artinian;
\item[(ii)] If $M$ is artinian, then $M^*$ is finitely
generated.
\end{itemize}
\end{lemma}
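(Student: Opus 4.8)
The plan is to reduce both assertions to classical Matlis duality by first identifying the Macdonald dual $M^{*}$ with the Matlis dual $D(M)=\Hom_R(M,E(R/\m))$. If $M$ is finitely generated, then $M$ carries the $\m$-adic topology and, by the Artin--Rees lemma, every submodule of $M$ is closed, so $M$ is \sd; if $M$ is \ar, then $M$ is discrete, hence also \sd. In either case Lemma~\ref{L:msbmdcpttnrr} gives $M^{*}=D(M)$. It therefore suffices to show that over the complete local noetherian ring $(R,\m)$ the functor $D(-)=\Hom_R(-,E(R/\m))$ sends finitely generated modules to \ar\ ones and \ar\ modules to finitely generated ones. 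Here I will use three standard facts: $D$ is exact (because $E:=E(R/\m)$ is injective); $E$ is an \ar\ $R$-module; and $\Hom_R(E,E)\cong\widehat R=R$, the last equality being where completeness enters.

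For (i), write $M$ as a quotient of a finite free module, $R^{n}\twoheadrightarrow M$. Applying the left-exact contravariant functor $D$ yields an exact sequence $0\to D(M)\to D(R)^{n}$, and $D(R)=\Hom_R(R,E)\cong E$ is \ar. Since the class of \ar\ modules is closed under finite direct sums and under submodules, $D(M)$ is \ar; hence $M^{*}$ is \ar.

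For (ii), since $M$ is \ar\ the socle $\Soc(M)=(0:_M\m)$ is a finite-dimensional $R/\m$-vector space, say of dimension $n$, and $M$ is an essential extension of $\Soc(M)$ (any nonzero submodule of $M$ contains a simple one). Hence the inclusion $\Soc(M)\hookrightarrow M$ extends to an embedding $M\hookrightarrow E^{n}$, an injective hull of $(R/\m)^{n}$. Applying the exact functor $D$ to $0\to M\to E^{n}$ gives a surjection $D(E)^{n}\twoheadrightarrow D(M)$, and $D(E)=\Hom_R(E,E)\cong R$, so $D(M)$ is a quotient of $R^{n}$ and is therefore finitely generated; hence $M^{*}$ is finitely generated.

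The one point requiring care is the identification $M^{*}=D(M)$: it rests on Lemma~\ref{L:msbmdcpttnrr} together with the observations that finitely generated modules (with the $\m$-adic topology) and \ar\ modules (with the discrete topology) are \sd. Everything after that is routine Matlis duality, so I do not expect a serious obstacle; the only genuine input beyond the excerpt is the classical structure of $E(R/\m)$ over a complete local noetherian ring.
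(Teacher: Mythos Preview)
Your proof is correct and follows essentially the same route as the paper: reduce to $M^{*}=D(M)$ via Lemma~\ref{L:msbmdcpttnrr} by checking that finitely generated (resp.\ artinian) modules are semi-discrete, and then invoke Matlis duality. The only difference is cosmetic: the paper cites \cite[7.3]{macdua} for the semi-discreteness of finitely generated modules and \cite[3.4.11, 3.4.12]{strhom} for the Matlis duality conclusions, whereas you supply direct arguments (Artin--Rees/Krull intersection for closedness of submodules, and the standard embeddings $D(M)\hookrightarrow E^{n}$ and $R^{n}\twoheadrightarrow D(M)$).
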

\begin{proof}
(i). AS  $M$ is a finitely generated module over the complete local
noetherian ring $(R, \m),$ It follows from \cite[7.3]{macdua} that
$M$ is \lc\ and semi-discrete. Then $M^*=D(M)$ by
\ref{L:msbmdcpttnrr}. Now, the conclusion follows from
\cite[3.4.11]{strhom}.

 (i). It should be noted that an artinian $R-$module is a \lc\
$R-$module with the discrete topology. Then $M^*=D(M)$ by
\ref{L:msbmdcpttnrr}. Finally, the conclusion follows from
\cite[3.4.12]{strhom}.
\end{proof}

We have the following dual theorem.

\begin{theorem}\label{T:dldnddddfm} Let $(R,\m)$ be a complete ring and $M$ a \lc\
$R-$module. Then
$$\mathfrak{F}^I_{i}(M^*)\cong  \mathfrak{F}_{I}^{i}(M)^*,$$
$$\mathfrak{F}_I^{i}(M^*)\cong  \mathfrak{F}^{I}_{i}(M)^*.$$
for for all $i.$
\end{theorem}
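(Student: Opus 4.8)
The plan is to establish the first isomorphism $\mathfrak{F}^I_{i}(M^*)\cong \mathfrak{F}_{I}^{i}(M)^*$ by commuting Macdonald duality past each of the operations that build the two functors, and then to obtain the second isomorphism by the same argument applied to $M^*$ in place of $M$, using the reflexivity $M^{**}\cong M$ from Lemma \ref{L:dnmdncplrrnrrlcp}(ii). First I would unwind the definitions: $\mathfrak{F}_{I}^{i}(M) = \inlim H_{\m}^i(M/I^tM)$ and $\mathfrak{F}^I_{i}(M^*) = \dlim H^{\m}_i(0:_{M^*} I^t)$. The first key step is the identification $(0:_{M^*}I^t)\cong (M/I^tM)^*$, which should follow from the standard fact that Macdonald duality turns the cokernel of $I^t\colon M\to M$ into the kernel of $I^t$ on $M^*$ (exactness of $(-)^*$ on linearly compact / semi-discrete modules, as in \cite{macdua}); here one uses that $M$ being linearly compact makes $M^*$ semi-discrete and the relevant sequences strict.

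The second key step is the duality between local cohomology and local homology: for a suitable module $N$ one has $H^{\m}_i(N^*) \cong H_{\m}^i(N)^*$. This is exactly the kind of statement recorded in \cite{cuothe, cuoalo} relating $\Tor$ and $\Ext$ under Matlis/Macdonald duality; applied with $N = M/I^tM$ it gives
$$H^{\m}_i(0:_{M^*}I^t)\cong H^{\m}_i((M/I^tM)^*)\cong H_{\m}^i(M/I^tM)^*.$$
The third step is to pass to the limit: Macdonald duality converts the inverse system $\{H_{\m}^i(M/I^tM)\}_t$ (with its transition maps coming from $M/I^{t+1}M\to M/I^tM$) into the direct system $\{H_{\m}^i(M/I^tM)^*\}_t$, and by \cite[\S 9]{macdua} (duality interchanges $\varprojlim$ of linearly compact modules with $\varinjlim$) we get $\dlim H_{\m}^i(M/I^tM)^* \cong \big(\inlim H_{\m}^i(M/I^tM)\big)^*$. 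Chaining the three isomorphisms yields $\mathfrak{F}^I_{i}(M^*)\cong \mathfrak{F}_{I}^{i}(M)^*$, and checking that these isomorphisms are compatible with the transition maps (so that the limit in step three is legitimate) is the routine bookkeeping I would not spell out in full.

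For the second isomorphism, apply the first one with $M$ replaced by $M^*$: since $M$ is linearly compact, $M^*$ is semi-discrete (Lemma \ref{L:dnmdncplrrnrrlcp}(i)), hence itself linearly compact, so the first isomorphism applies and gives $\mathfrak{F}^I_{i}(M^{**})\cong \mathfrak{F}_I^{i}(M^*)^*$; then dualize and use $M^{**}\cong M$ together with $(-)^{**}\cong \mathrm{id}$ on the relevant class (Lemma \ref{L:dnmdncplrrnrrlcp}(ii)) to rewrite the left side as $\mathfrak{F}^I_{i}(M)^*{}^{*}$ — more precisely, dualizing $\mathfrak{F}_I^{i}(M^*)^*\cong\mathfrak{F}^I_{i}(M)$ gives $\mathfrak{F}_I^{i}(M^*)\cong\mathfrak{F}^I_{i}(M)^*$ once one knows $\mathfrak{F}_I^{i}(M^*)$ lies in a class for which $(-)^{**}\cong\mathrm{id}$ (it does: $M^*$ linearly compact forces $M^*/I^tM^*$ linearly compact, so $H_{\m}^i$ of it is artinian, hence its formal local cohomology is a suitable limit of artinian modules).

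The main obstacle I anticipate is the topological bookkeeping in steps one and three: one must verify that all the short exact sequences involved ($0\to I^tM\to M\to M/I^tM\to 0$ and the Koszul/Čech complexes computing $H_{\m}^i$) are strict exact sequences of linearly compact or semi-discrete modules so that $(-)^*$ stays exact, and that the continuity of the transition maps is preserved under dualization — this is where the hypothesis "$M$ linearly compact over the complete ring $(R,\m)$" does real work, via \cite[7.1]{cuoalo} and the structure theory in \cite{macdua}. Once exactness and continuity are in hand, the three displayed isomorphisms and their limit are formal.
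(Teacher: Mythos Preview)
Your treatment of the first isomorphism is essentially the paper's: identify $0:_{M^*}I^t\cong (M/I^tM)^*$, commute Macdonald dual past local (co)homology via the duality result from \cite{cuoalo}, and then interchange $\varinjlim$ with $(-)^*$ of an inverse system of linearly compact modules via \cite{macdua}. That part is fine.

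The gap is in your derivation of the second isomorphism. You propose to apply the first isomorphism with $M^*$ in place of $M$, and you justify this by saying ``since $M$ is linearly compact, $M^*$ is semi-discrete (Lemma~\ref{L:dnmdncplrrnrrlcp}(i)), hence itself linearly compact.'' That implication is false: semi-discrete (or even linearly discrete) does not imply linearly compact. Lemma~\ref{L:dnmdncplrrnrrlcp}(i) gives you only that $M^*$ is linearly discrete; to get $M^*$ linearly compact you would need $M$ semi-discrete, which is not assumed. Without $M^*$ linearly compact you cannot invoke the first isomorphism for $M^*$, and the same obstruction blocks your appeal to Lemma~\ref{L:dcltcsdcltddfm}(ii) for the reflexivity of $\mathfrak{F}_I^{i}(M^*)$.

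The paper avoids this by proving the second isomorphism directly, symmetrically to the first: one uses $M^*/I^tM^*\cong (0:_MI^t)^*$ (the other half of the identity you quoted), then $H_{\m}^i\big((0:_MI^t)^*\big)\cong \big(H^{\m}_i(0:_MI^t)\big)^*$ again from \cite{cuoalo}, and finally $\varprojlim_t(-)^*\cong(\varinjlim_t -)^*$ from \cite{macdua}. This route never leaves the class of linearly compact modules, since $0:_MI^t$ is a closed submodule of the linearly compact $M$, so no extra hypothesis on $M^*$ is needed.
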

\begin{proof} It should be noted by \cite[6.7]{cuoalo} that
$$M^*/I^tM^*\cong (0:_M I^t)^*,$$
$$(M/I^tM)^*\cong 0:_{M^*} I^t$$for all $t>0.$ The we have
\begin{align*}
\mathfrak{F}^I_{i}(M^*) & = \limt H^{\m}_i(0:_{M^*} I^t)\\
&\cong \limt H^{\m}_i((M/I^tM)^*)\\
&\cong \limt (H_{\m}^i(M/I^tM))^*\ \text{(cf. \cite[6.4(ii)]{cuoalo})}\\
&\cong (\limn H_{\m}^i(M/I^tM))^* = \mathfrak{F}_{I}^{i}(M)^*\
\text{(cf. \cite[9.14]{macdua})}.
\end{align*}
\begin{align*}
\mathfrak{F}_I^{i}(M^*) & = \limn H_{\m}^i(M^*/I^tM^*)\\
&\cong \limn H_{\m}^i((0:_M I^t)^*)\\
&\cong \limn (H^{\m}_i(0:_M I^t))^*\ \text{(cf. \cite[6.4(ii)]{cuoalo})}\\
&\cong (\limt H^{\m}_i(0:_M I^t))^* = \mathfrak{F}^{I}_{i}(M)^*\
\text{(cf. \cite[2.6]{macdua})}.
\end{align*} The proof is complete.
\end{proof}

\begin{corollary}\label{C:hqdldnddddfm} Let $(R,\m)$ be a complete ring and $M$ a \lc\
$R-$module. Then
$$\mathfrak{F}^I_{i}(M)\cong  \mathfrak{F}_{I}^{i}(M^*)^*,$$
$$\mathfrak{F}_I^{i}(M)\cong  \mathfrak{F}^{I}_{i}(M^*)^*.$$
for for all $i.$
\end{corollary}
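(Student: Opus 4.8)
The plan is to read Corollary \ref{C:hqdldnddddfm} off Theorem \ref{T:dldnddddfm} by running that theorem on the Macdonald dual $M^{*}$ in place of $M$, and then collapsing the second dual $M^{**}$ back to $M$. Concretely, Theorem \ref{T:dldnddddfm} applied to $M^{*}$ gives $\mathfrak{F}^{I}_{i}(M^{**})\cong\mathfrak{F}_{I}^{i}(M^{*})^{*}$ and $\mathfrak{F}_{I}^{i}(M^{**})\cong\mathfrak{F}^{I}_{i}(M^{*})^{*}$ for all $i$; substituting a suitable isomorphism $M^{**}\cong M$ on the left then produces exactly the two asserted isomorphisms $\mathfrak{F}^{I}_{i}(M)\cong\mathfrak{F}_{I}^{i}(M^{*})^{*}$ and $\mathfrak{F}_{I}^{i}(M)\cong\mathfrak{F}^{I}_{i}(M^{*})^{*}$.

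The reflexivity input is supplied by Lemma \ref{L:dnmdncplrrnrrlcp}. Since $M$ is linearly compact over the complete ring $(R,\m)$, part (i) shows $M^{*}$ is linearly discrete, hence semi-discrete, and part (ii) gives a topological isomorphism $\omega\colon M\overset{\simeq}{\longrightarrow}M^{**}$. Because $\mathfrak{F}^{I}_{i}$ and $\mathfrak{F}_{I}^{i}$ are built from the algebraic operations $0:_{-}I^{t}$, $(-)/I^{t}(-)$, $\Tor^{R}_{i}(R/\m^{t},-)$, $\Ext^{i}_{R}(R/\m^{t},-)$ together with $\limt$ and $\limn$, they carry the module isomorphism underlying $\omega$ to isomorphisms $\mathfrak{F}^{I}_{i}(M^{**})\cong\mathfrak{F}^{I}_{i}(M)$ and $\mathfrak{F}_{I}^{i}(M^{**})\cong\mathfrak{F}_{I}^{i}(M)$, which is all that is needed to finish.

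The one delicate point — and what I expect to be the real content of the argument — is the legitimacy of invoking Theorem \ref{T:dldnddddfm} for $M^{*}$: that theorem is stated for linearly compact modules, whereas a priori we only know $M^{*}$ to be semi-discrete (linearly discrete). If $M$ is moreover semi-discrete this is a non-issue, since then $M^{*}$ is linearly compact by the second half of Lemma \ref{L:dnmdncplrrnrrlcp}(i), so one simply applies the theorem to $M^{*}$. In general the safer route is to apply $(-)^{*}$ to the two isomorphisms of Theorem \ref{T:dldnddddfm} for $M$ itself, obtaining $\mathfrak{F}^{I}_{i}(M^{*})^{*}\cong\mathfrak{F}_{I}^{i}(M)^{**}$ and $\mathfrak{F}_{I}^{i}(M^{*})^{*}\cong\mathfrak{F}^{I}_{i}(M)^{**}$, and then to show that $\mathfrak{F}_{I}^{i}(M)$ and $\mathfrak{F}^{I}_{i}(M)$ are themselves reflexive, so that Lemma \ref{L:dnmdncplrrnrrlcp}(ii) removes the double duals. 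For $\mathfrak{F}_{I}^{i}(M)=\limn H_{\m}^{i}(M/I^{t}M)$ this should follow from the linear compactness of the $M/I^{t}M$, the behaviour of $H^{i}_{\m}$ on linearly compact modules, and the stability of linear compactness under inverse limits; the harder case is $\mathfrak{F}^{I}_{i}(M)=\limt H^{\m}_{i}(0:_{M}I^{t})$, since it is a \emph{direct} limit of the linearly compact modules $H^{\m}_{i}(0:_{M}I^{t})$ recorded after Corollary \ref{L:hqcpitddfmb0}, and establishing its reflexivity (for instance by identifying it, via Theorem \ref{T:dldnddddfm}, with the Macdonald dual of a formal local cohomology module of $M^{*}$) is the step I would expect to require the most care. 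Once reflexivity is secured the corollary is immediate.
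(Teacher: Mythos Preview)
Your first two paragraphs are exactly the paper's argument: it writes
$\mathfrak{F}^I_{i}(M)\cong\mathfrak{F}^I_{i}(M^{**})\cong\mathfrak{F}_I^{i}(M^{*})^{*}$
and the companion line, citing only Lemma~\ref{L:dnmdncplrrnrrlcp}(ii) for $M\cong M^{**}$ and Theorem~\ref{T:dldnddddfm} applied with $M^{*}$ in place of $M$.

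The concern you flag in your third paragraph---that Theorem~\ref{T:dldnddddfm} is stated for linearly compact modules while Lemma~\ref{L:dnmdncplrrnrrlcp}(i) only guarantees $M^{*}$ is linearly discrete---is simply not addressed in the paper; it invokes the theorem for $M^{*}$ without comment. Your instinct here is sound, and your proposed workaround (dualize the conclusions of Theorem~\ref{T:dldnddddfm} for $M$ and then argue reflexivity of $\mathfrak{F}^{I}_{i}(M)$ and $\mathfrak{F}_{I}^{i}(M)$) goes well beyond what the paper does. It is worth noting, though, that every subsequent use of this corollary in the paper (Theorem~\ref{T:dkhdddfmartin}, Lemma~\ref{L:cosddfmgvicvm}) takes $M$ artinian, hence semi-discrete linearly compact, and then $M^{*}$ \emph{is} linearly compact by the second clause of Lemma~\ref{L:dnmdncplrrnrrlcp}(i); so in the cases that matter downstream, your ``non-issue'' branch applies and the direct argument is fully justified.
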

\begin{proof} Combining \ref{L:dnmdncplrrnrrlcp} (ii) with
\ref{T:dldnddddfm} yields
\begin{align*}
\mathfrak{F}^I_{i}(M) & \cong  \mathfrak{F}^I_{i}(M^{**})
 \cong  \mathfrak{F}_I^{i}(M^{*})^*,\\
\mathfrak{F}_I^{i}(M)& \cong \mathfrak{F}_I^{i}(M^{**}) \cong
\mathfrak{F}^{I}_{i}(M^*)^*
\end{align*}as required.
\end{proof}

In order to state Theorem \ref{T:dkhdddfmartin} about the long exact
sequence of formal local homology modules we need the following
lemma.

\begin{lemma}\label{L:dcltcsdcltddfm}
\begin{itemize}
\item[(i)] The continuous homomorphism of linearly
topology $R-$modules $f: M \longrightarrow N$ induces continuous
homomorphisms of formal local cohomology modules

$\varphi_i: \mathfrak{F}_I^{i}(M) \longrightarrow
\mathfrak{F}_I^{i}(N)$ for all
$i.$\\
\item[(ii)] If $M$ is a semi-discrete \lc\ $R-$module, then the
formal local cohomology modules $\mathfrak{F}_I^{i}(M)$ are linearly
compact for all $i.$
\end{itemize}
\end{lemma}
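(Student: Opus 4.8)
The plan is to read off both statements from the inverse system $\{M/I^{t}M\}_{t>0}$ that defines $\mathfrak{F}_I^{i}$: part (i) is functoriality of local cohomology, while part (ii) is an inverse-limit argument resting on one structural input.

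For (i), the continuous homomorphism $f : M \lr N$ induces for each $t>0$ a homomorphism $f_{t} : M/I^{t}M \lr N/I^{t}N$ on the quotients, and $f_{t}$ is continuous for the quotient topologies and compatible with the canonical surjections $M/I^{t+1}M \lr M/I^{t}M$ and $N/I^{t+1}N \lr N/I^{t}N$. Applying the covariant functor $H_{\m}^{i}(-)$ then yields a morphism of inverse systems $\{H_{\m}^{i}(M/I^{t}M)\}_{t} \lr \{H_{\m}^{i}(N/I^{t}N)\}_{t}$, and passing to $\varprojlim_{t}$ produces the homomorphism $\varphi_{i} : \mathfrak{F}_I^{i}(M) \lr \mathfrak{F}_I^{i}(N)$. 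For the continuity of $\varphi_{i}$ I would use the presentation $H_{\m}^{i}(P) = \varinjlim_{s} H^{i}(\underline{x}(s), P)$, with $\underline{x} = x_{1}, \dots , x_{r}$ a system of generators of $\m$: each $H^{i}(\underline{x}(s), P)$ is a subquotient of a finite power of $P$, so the map induced on it by $f_{t}$ is continuous; hence $H_{\m}^{i}(f_{t})$ is continuous, and an inverse limit of continuous homomorphisms is continuous.

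For (ii), let $M$ be \sdlc . Every submodule of a semi-discrete module is closed, so $I^{t}M$ is a closed submodule of $M$; a quotient of a \lcm\ by a closed submodule is \lc , and it stays semi-discrete for the quotient topology, so each $M/I^{t}M$ is \sdlc . Granting the structural fact that $H_{\m}^{i}$ sends \sdlc\ modules to \lcms\ (discussed in the last paragraph), the canonical surjections $M/I^{t+1}M \lr M/I^{t}M$ induce, by part (i), continuous homomorphisms $H_{\m}^{i}(M/I^{t+1}M) \lr H_{\m}^{i}(M/I^{t}M)$. Therefore $\mathfrak{F}_I^{i}(M) = \varprojlim_{t} H_{\m}^{i}(M/I^{t}M)$ is a closed submodule of the product $\prod_{t} H_{\m}^{i}(M/I^{t}M)$, and since a product of \lcms\ is \lc\ and a closed submodule of a \lcm\ is \lc , it follows that $\mathfrak{F}_I^{i}(M)$ is \lc\ (\cite{macdua}).

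The step I expect to be the main obstacle is exactly that structural fact: $H_{\m}^{i}(P)$ is \lc\ whenever $P$ is \sdlc . For this I would invoke the results on local cohomology of linearly compact modules from \cite{cuoalo}. One can also argue by duality over $\widehat{R}$: there $P^{*}$ is \lc\ (Lemma \ref{L:dnmdncplrrnrrlcp}(i)), the isomorphism $(H_{\m}^{i}(P))^{*} \cong H^{\m}_{i}(P^{*})$ used in the proof of Theorem \ref{T:dldnddddfm} exhibits the Macdonald dual of $H_{\m}^{i}(P)$ as the local homology module of the \lcm\ $P^{*}$, which is \lc\ by \cite[3.3]{cuoalo}, and the reflexivity in Lemma \ref{L:dnmdncplrrnrrlcp}(ii) then transfers linear compactness back to $H_{\m}^{i}(P)$ itself.
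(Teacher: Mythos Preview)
Your argument for (i) matches the paper's; the only cosmetic difference is that you trace continuity through the Koszul presentation of $H^i_{\m}$ while the paper uses the $\Ext$ presentation and appeals to \cite[2.5]{cuoalo}.

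For (ii), your outline (quotients $M/I^tM$ are \sdlc, local cohomology of each is \lc, and an inverse limit of \lcms\ with continuous transition maps is \lc) is correct and is essentially the paper's route. The paper, however, short-circuits your ``structural fact'' by invoking the sharper result \cite[7.9]{cuoalo}: for $P$ \sdlc\ one has $H^i_{\m}(P)$ \emph{artinian}, not merely \lc. This makes the inverse-limit step immediate via \cite[3.7, 3.10]{macdua}. Your first proposed justification (an unspecified reference to \cite{cuoalo}) is thus on target but should point to \cite[7.9]{cuoalo} specifically.

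Your alternative duality argument, on the other hand, does not work as written. From $(H^i_{\m}(P))^* \cong H^{\m}_i(P^*)$ being \lc\ you cannot use reflexivity to conclude that $H^i_{\m}(P)$ is \lc. Lemma~\ref{L:dnmdncplrrnrrlcp}(i) says the Macdonald dual of \emph{any} semi-discrete module is \lc, so knowing that $X^*$ is \lc\ carries no information back about $X$; and Lemma~\ref{L:dnmdncplrrnrrlcp}(ii) requires you to already know $X$ is \lc\ or \ld\ before you may identify $X$ with $X^{**}$. To make a duality argument go through you would instead need an isomorphism of the form $H^i_{\m}(P) \cong (H^{\m}_i(P^*))^*$ together with semi-discreteness (not just linear compactness) of $H^{\m}_i(P^*)$, and establishing that is no easier than quoting \cite[7.9]{cuoalo} directly.
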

\begin{proof} (i).
The continuous homomorphism  $f: M \longrightarrow N$ induces
continuous homomorphisms $M/I^tM \longrightarrow N/I^tN$ for all
$t>0.$ By an  argument analogous to that used for the proof of
\cite[2.5]{cuoalo} we get continuous homomorphisms
${\Ext}^i_R(R/{\m}^s; M/I^tM) \longrightarrow {\Ext}^i_R(R/{\m}^s;
N/I^tN)$ for all $s,\ t>0$ and $i.$ By passing into direct limits
$\dlims$ we have
 continuous homomorphisms of local cohomology modules
$H^{\m}_i(M/I^tM) \longrightarrow H^{\m}_i(N/I^tN).$ Now, by passing
into inverse limits $\inlim$ we get continuous homomorphisms of
formal local cohomology modules $\varphi_i: \mathfrak{F}_I^{i}(M)
\longrightarrow \mathfrak{F}_I^{i}(N)$ for all $i.$

(ii). By \cite[7.9]{cuoalo}  the local cohomology modules
$H^i_{\m}(M/I^tM)$ are artinian for all $t$ and $i.$ Note that
$\mathfrak{F}_I^{i}(M) = \inlim H^i_{\m}(M/I^tM).$ Therefore, the
conclusion follows from \cite[3.7, 3.10]{macdua}.
\end{proof}

\begin{theorem}\label{T:dkhdddfmartin} Let $0\longrightarrow M'
\longrightarrow M \longrightarrow M'' \longrightarrow 0$ be a short
exact sequence of artinian modules. Then there is a long exact
sequence of $I-$formal local homology modules
$$... \longrightarrow \mathfrak{F}^I_{i}(M') \longrightarrow
\mathfrak{F}^I_{i}(M) \longrightarrow \mathfrak{F}^I_{i}(M'')
\longrightarrow \mathfrak{F}^I_{i-1}(M')\longrightarrow ....$$
\end{theorem}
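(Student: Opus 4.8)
The plan is to build the long exact sequence from a short exact sequence of inverse systems and then pass to the limit, being careful about exactness of the relevant limit functors. Given the short exact sequence $0\to M'\to M\to M''\to 0$ of artinian modules, for each $t>0$ the submodules $0:_{M'}I^t$, $0:_MI^t$, $0:_{M''}I^t$ do \emph{not} in general form a short exact sequence (the functor $0:_{(-)}I^t=\Hom_R(R/I^t,-)$ is only left exact), so I would instead work with the Matlis duals. Since $M',M,M''$ are artinian over the complete ring $(R,\m)$, Matlis duality is exact and $D(M'')\to D(M)\to D(M')\to 0$ is a short exact sequence $0\to D(M'')\to D(M)\to D(M')\to 0$ of finitely generated $\widehat R$-modules. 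Applying the dual theorem \ref{T:dldnddddfm} (valid over the complete ring, with $M^*=D(M)$ for artinian $M$ by \ref{L:msbmdcpttnrr}), it suffices to produce a long exact sequence for the formal local cohomology modules $\mathfrak{F}_I^{i}(-)$ of finitely generated $\widehat R$-modules, and then dualize back.

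For the formal local cohomology side, recall $\mathfrak{F}_I^{i}(N)=\inlim H^i_{\m}(N/I^tN)$. From $0\to N''\to N\to N'\to 0$ one gets, for each $t$, exact sequences relating $N/I^tN$, and more precisely one uses the standard device (as in Schenzel's work) of replacing $\{N''/I^tN''\}$ by the cofinal subsystem $\{(N''\cap I^tN+ \text{stuff})\}$—concretely, by the Artin--Rees lemma the inverse systems $\{N/(I^tN+N'')\}$ and $\{N''/I^tN''\}$, resp.\ the images of $I^tN$ in $N'$, are pro-isomorphic to $\{N'/I^tN'\}$ up to cofinal reindexing, so the short exact sequences of modules $0\to N'/(I^tN\cap N') \to N/I^tN \to N''/I^tN'' \to 0$ give rise, via the long exact sequence of local cohomology $H^i_{\m}(-)$, to a directed system of long exact sequences. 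Taking the inverse limit $\inlim$ over $t$: the terms $H^i_{\m}(N/I^tN)$ are artinian (by \cite[7.9]{cuoalo}) hence the inverse system satisfies the Mittag--Leffler condition, so $\inlim$ is exact on these systems and $\varprojlim^1$ vanishes; this yields the long exact sequence
$$\cdots \to \mathfrak{F}_I^{i}(N') \to \mathfrak{F}_I^{i}(N)\to \mathfrak{F}_I^{i}(N'') \to \mathfrak{F}_I^{i+1}(N')\to\cdots.$$
The connecting maps are continuous by the argument in \ref{L:dcltcsdcltddfm}(i).

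Finally I would dualize. Applying Macdonald duality $(-)^*$ to the above long exact sequence for $N'=D(M')$, $N=D(M)$, $N''=D(M'')$ and using $\mathfrak{F}_I^{i}(D(M))^*\cong \mathfrak{F}^I_{i}(D(M)^*)\cong\mathfrak{F}^I_{i}(M)$ (by \ref{T:dldnddddfm} and \ref{L:dnmdncplrrnrrlcp}(ii), since artinian modules are semi-discrete linearly compact), and using that $(-)^*$ is exact on linearly compact modules (so that it turns the long exact sequence of linearly compact formal local cohomology modules—linearly compact by \ref{L:dcltcsdcltddfm}(ii)—into an exact sequence), I obtain the desired long exact sequence
$$\cdots \to \mathfrak{F}^I_{i}(M')\to \mathfrak{F}^I_{i}(M)\to \mathfrak{F}^I_{i}(M'')\to \mathfrak{F}^I_{i-1}(M')\to\cdots.$$
The main obstacle I anticipate is the bookkeeping in the middle step: the naive short exact sequences of the truncated modules $0:_{(-)}I^t$ are not exact, so one genuinely needs the Matlis-dual reformulation (or an Artin--Rees/pro-isomorphism argument) to get honest short exact sequences of inverse systems, and then one must verify the Mittag--Leffler/artinianness input that makes $\inlim$ exact. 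Everything else—continuity of the maps, the duality identifications—is routine given the lemmas already established.
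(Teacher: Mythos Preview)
Your proposal is correct and follows essentially the same route as the paper: reduce to the complete case, Matlis-dualize to a short exact sequence of finitely generated modules, invoke the long exact sequence for formal local \emph{cohomology}, then apply Macdonald duality (using linear compactness of the $\mathfrak{F}_I^{i}$ and continuity of the maps) together with Corollary~\ref{C:hqdldnddddfm} to return to formal local homology. The only difference is cosmetic: where you sketch the Artin--Rees/Mittag--Leffler argument for the formal cohomology long exact sequence, the paper simply cites Schenzel \cite[3.11]{schonf}, and the paper makes the reduction to $\widehat R$ explicit via \cite[1.11]{shaame} before invoking duality.
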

\begin{proof} It
should be noted by \cite[1.11]{shaame} that an artinian module over
a local noetherian ring $(R,\m)$ has a natural structure of artinian
module over $\widehat{R}$ and  a subset of $M$ is an $R-$submodule
if and only if it is an $\widehat{R}-$submodule. Thus, from
\ref{C:dbatcpitddfmb0} we may assume that $(R,\m)$ is a complete
ring. We now consider the short exact sequence of artinian
$R-$modules $$0\longrightarrow M' \overset{f}\longrightarrow M
\overset{g}\longrightarrow M'' \longrightarrow 0.$$Note that the
artinian $R-$modules are \lc\ and discrete, then the homomorphisms
$f, g$ are continuous. Combining \cite[6.5]{cuoalo} with
\ref{L:mdnrrdbsaoathhs}
 we have  the following short exact sequence of
finitely generated $R-$modules
$$0\longrightarrow M''^* \overset{f^*}\longrightarrow M^* \overset{g^*}\longrightarrow M'^*
\longrightarrow  0$$ where   the induced homomorphisms $f^*,\ g^*$
are continuous.  It gives rise by \cite[3.11]{schonf} a long exact
sequence of $I-$formal local cohomology modules
$$... \longrightarrow \mathfrak{F}_I^{i-1}(M'^*)\longrightarrow   \mathfrak{F}_I^{i}(M''^*) \overset{g_i}\longrightarrow
\mathfrak{F}_I^{i}(M^*) \overset{f_i}\longrightarrow
\mathfrak{F}_I^{i}(M'^*) \longrightarrow ....$$ It should be
mentioned from \ref{L:dcltcsdcltddfm} (ii) that the $I-$formal local
cohomology modules $\mathfrak{F}_I^{i}(M^*),
\mathfrak{F}_I^{i}(M'^*), \mathfrak{F}_I^{i}(M''^*)$
 are linearly
compact and the homomorphisms of the long exact sequence are inverse
limits of the homomorphisms of artinian modules. Note that the
homomorphisms of artinian modules are continuous, because the
topologies on the artinian modules  are discrete. Moreover, the
inverse limits of the continuous homomorphisms are also continuous.
Then the homomorphisms of the long exact sequence are continuous.
 Therefore the long exact
sequence induces an exact sequence by
 \cite[6.5]{cuoalo}
$$...    (\mathfrak{F}_I^{i}(M'^*))^* \longrightarrow
(\mathfrak{F}_I^{i}(M^*))^* \longrightarrow
(\mathfrak{F}_I^{i}(M''^*))^*
 \longrightarrow (\mathfrak{F}_I^{i-1}(M'^*))^*  ....$$
The conclusion now follows from \ref{C:hqdldnddddfm}.
\end{proof}

We now recall the  concept of {\it Noetherian dimension} of an
$R-$module $M$ denoted by $\Ndim M.$ Note that the notion of
Noetherian dimension was introduced first by R. N. Roberts
\cite{robkru} by the name Krull dimension. Later, D. Kirby
\cite{kirdim} changed this terminology of Roberts and refereed to
{\it Noetherian dimension} to avoid confusion with well-know Krull
dimension of finitely generated modules.  Let $M$ be an $R-$module.
When $M=0$ we put $\Ndim M = -1.$ Then by induction, for any ordinal
$\alpha,$ we put $\Ndim M = \alpha$ when (i) $\Ndim M < \alpha$ is
false, and (ii) for every ascending chain $M_0 \subseteq M_1
\subseteq \ldots$ of submodules of $M,$ there exists a positive
integer $m_0 $  such that $\Ndim(M_{m+1} /M_m )< \alpha$ for all $m
\geq m_0$. Thus $M$ is non-zero and finitely generated if and only
if $\Ndim M = 0.$ If $0 \longrightarrow M" \longrightarrow
M\longrightarrow M'\longrightarrow 0$ is a short exact sequence of
$R-$modules, then $\Ndim M= \max\{ \Ndim M", \Ndim M'\}.$

\begin{proposition}\label{P:mdcpitddfmb0} Let $I, J$ be ideals of $R$ and $M$  an artinian $R-$module. If
$\Ndim (0:_MI)=0,$ then
$$\mathfrak{F}^I_{i,J}(M)\cong \begin{cases} 0 & i\not= 0\\
 M & i=0.
\end{cases}$$
\end{proposition}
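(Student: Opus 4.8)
The plan is to reduce the statement to Lemma \ref{L:cpitddfmb0} by showing that the hypothesis $\Ndim(0:_MI)=0$ forces $0:_MI^t$ to be finitely generated, hence (since it is artinian) of finite length, for every $t>0$. Recall that for an $R$-module $N$ one has $\Ndim N=0$ precisely when $N$ is non-zero and finitely generated, and that $\Ndim$ of a non-zero module is $0$ exactly when the module has finite length in the presence of artinianness. First I would observe that $0:_MI\subseteq 0:_MI^2\subseteq\cdots$ is an ascending chain of submodules of the artinian module $M$, each of which is artinian; since $M$ is artinian this chain stabilizes, so there is an integer $n$ with $0:_MI^t=0:_MI^n$ for all $t\geq n$. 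Thus the direct system $\{H^J_i(0:_MI^t)\}$ is eventually constant and $\mathfrak{F}^I_{i,J}(M)\cong H^J_i(0:_MI^n)$.

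Next I would use the hypothesis. Because $0:_MI^n$ is artinian and $I$ acts nilpotently on it (indeed $I^n(0:_MI^n)\subseteq 0:_MI^n$ with $I\cdot(0:_MI)=0$, and more to the point $I$ is in the radical of the annihilator after stabilization — one checks $I^{n}$ kills $0:_MI^n$ is false in general, so instead I use: $0:_MI^n$ has a finite filtration with successive quotients killed by $I$, each a subquotient of $0:_MI$, hence of finite length by $\Ndim(0:_MI)=0$ combined with artinianness). Therefore $0:_MI^n$ has finite length, so in particular it is finitely generated and also $I$-separated (as $I\subseteq\m$ and a finite-length module is $\m$-separated after finitely many steps, hence $I$-separated, i.e.\ $\bigcap_t I^t(0:_MI^n)=0$).

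Now Lemma \ref{L:cpitddfmb0} applies to the linearly compact (indeed artinian) $R$-module $0:_MI^n$, which is $J$-separated as well since it has finite length and $J\subseteq\m$: we get $H^J_i(0:_MI^n)=0$ for $i\neq 0$ and $H^J_0(0:_MI^n)\cong\Gamma_I(0:_MI^n)=0:_MI^n$ (using that $\Gamma_I$ of an $I$-torsion module is the whole module, or Corollary \ref{C:dbatcpitddfmb0}). Passing back along the eventually-constant direct system, and using the stabilization $\mathfrak{F}^I_{i,J}(M)\cong\bigcup_t(0:_MI^t)=\Gamma_I(M)=M$ for $i=0$ (the last equality by \cite[1.4]{shaame} since $M$ is artinian over the local ring $(R,\m)$), yields exactly the asserted formula.

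The main obstacle is the bookkeeping in the middle step: one must be careful that $\Ndim(0:_MI)=0$, together with artinianness, genuinely upgrades each $0:_MI^t$ to finite length rather than merely finitely generated. The clean way is to note $0:_MI^{t}/0:_MI^{t-1}$ embeds in $0:_{M/(0:_MI^{t-1})}I$, a subquotient annihilated by $I$; combined with the behaviour $\Ndim M=\max\{\Ndim M',\Ndim M''\}$ for short exact sequences and the fact that an artinian module of Noetherian dimension $0$ is finitely generated, an easy induction on $t$ (valid once $t\geq n$, where the chain has stabilized, so only finitely many steps are needed) shows $0:_MI^n$ is Noetherian, hence of finite length. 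After that everything is a direct appeal to Lemma \ref{L:cpitddfmb0} and Corollary \ref{C:dbatcpitddfmb0}.
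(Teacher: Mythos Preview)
Your argument has a genuine error at the very first step. You claim that because $M$ is artinian, the ascending chain $0:_MI\subseteq 0:_MI^2\subseteq\cdots$ must stabilize. This is false: artinian modules satisfy the descending chain condition, not the ascending one. Indeed the chain typically does \emph{not} stabilize---take $R=k[[x]]$, $I=\m=(x)$, $M=E(R/\m)$; then $0:_MI^t$ has length exactly $t$, so the chain is strictly increasing, while $\Ndim(0:_MI)=0$. Your identification $\mathfrak{F}^I_{i,J}(M)\cong H^J_i(0:_MI^n)$ for a single $n$, and the remark ``only finitely many steps are needed'', both rest on this nonexistent stabilization, so the proof as written collapses. (There is also a smaller confusion: you assert that ``$I^{n}$ kills $0:_MI^n$ is false in general'', but $I^n(0:_MI^n)=0$ is the very definition of $0:_MI^n$.)

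The correct route---which is essentially the paper's---drops stabilization entirely and shows directly that \emph{each} $0:_MI^t$ has finite length. One clean way: if $I=(a_1,\dots,a_r)$, the map $x\mapsto(a_1x,\dots,a_rx)$ sends $0:_MI^t$ into $(0:_MI^{t-1})^r$ with kernel exactly $0:_MI$, giving an embedding $(0:_MI^t)/(0:_MI)\hookrightarrow(0:_MI^{t-1})^r$; now induct on $t$, the base case being the hypothesis $\Ndim(0:_MI)=0$ together with artinianness. (Your proposed embedding $(0:_MI^t)/(0:_MI^{t-1})\hookrightarrow 0:_{M/(0:_MI^{t-1})}I$ is in fact an equality, so by itself it does not reduce anything.) Once each $0:_MI^t$ has finite length it is $J$-separated, whence by \cite[3.8]{cuoalo} (as in Lemma~\ref{L:cpitddfmb0}) one gets $H^J_i(0:_MI^t)=0$ for $i\neq0$ and $H^J_0(0:_MI^t)=0:_MI^t$; passing to the direct limit over $t$ and invoking $\Gamma_I(M)=M$ from \cite[1.4]{shaame} finishes the proof.
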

\begin{proof} Since $\Ndim(0:_MI)=0,$  $0:_MI$ has finite length (cf. \cite[p. 269]{robkru})  and then  $0:_MI$ is $J-$separated.
It follows from \cite[3.8]{cuoalo} that
$$H^J_{i}(0:_MI^t)\cong \begin{cases} 0 & i\not= 0\\
 0:_MI^t & i=0
\end{cases}$$for all $t>0.$ By passing to direct limits we have
$$\mathfrak{F}^I_{i,J}(M)\cong \begin{cases} 0 & i\not= 0\\
 \Gamma_{I}(M) & i=0.
\end{cases}$$
Now the conclusion follows from \cite[1.4]{shaame},  as $M$ is an
artinian module over the local ring $(R,\m),$
\end{proof}

Remember that the {\it (Krull) dimension}  $\dim_R M$  of a non-zero
$R-$module $M$ is the supremum of lengths of chains of primes in the
support of $M$ if this supremum exists, and $\infty$ otherwise. For
convenience, we set $\dim M= -1$ if $M=0$.  Note that if $M$ is
non-zero and artinian, then $\dim M=0.$ If $M$ is finitely
generated, then $\dim M = \max\{\dim R/\p\mid \p\in \Ass M\}$.

In \cite[4.10]{cuoalo},  if $M$ is a non zero semi-discrete linearly
compact $R-$module, then

$$\Ndim \Gamma_{\m}(M) = \max \big\{ i\mid H^{\m}_i(M)\not=0\big\}\
\text{if}\  \Gamma_{\m}(M)\not= 0;$$
 $$\Ndim M= \max \big\{ i\mid H^{\m}_i(M)\not=0\big\}\
\text{if}\   \Ndim M\not= 1.$$

We have the non-vanishing theorem of formal local homology modules.

\begin{theorem}\label{L:dlkttdddpfm} Let $M$ be a non zero semi-discrete linearly
compact $R-$module. Then
\begin{itemize}
\item[(i)]
  $\Ndim (0:_MI) = \max \{ i
\mid \mathfrak{F}^I_{i}(M) \not=0 \}$ if $0\leq \Ndim (0:_MI)
\not=1;$
\item[(ii)] $\Ndim (0:_{\Gamma_{\m}(M)}I) = \max \big\{ i\mid
\mathfrak{F}^I_{i}(M)\not=0\big\}$\\ if $\Ndim
(0:_{\Gamma_{\m}(M)}I)\not= 0.$
\end{itemize}
\end{theorem}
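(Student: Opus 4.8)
The plan is to run the whole argument through the ascending chain of submodules $N_t:=0:_M I^t$, for which $\Gamma_I(M)=\bigcup_{t}N_t$ and, by definition, $\mathfrak{F}^I_i(M)=\varinjlim_t H^{\m}_i(N_t)$; each $N_t$ is again semi-discrete linearly compact, being a (necessarily closed) submodule of $M$. The first step is a stabilization lemma. Write $I=(x_1,\dots,x_r)$ and let $s$ be the number of degree-$t$ monomials in $x_1,\dots,x_r$; for any $R$-module $P$, sending $y$ to the tuple of its products with these monomials defines an embedding $(0:_PI^{t+1})/(0:_PI^t)\hookrightarrow(0:_PI)^{s}$. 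Since $\Ndim$ is additive on short exact sequences and monotone under submodules, an induction on $t$ gives $\Ndim(0:_PI^t)=\Ndim(0:_PI)$ for all $t\ge1$; applying the left-exact functor $\Gamma_{\m}$ (which commutes with finite direct sums) to the same exact sequences yields $\Ndim\Gamma_{\m}(0:_PI^t)=\Ndim\Gamma_{\m}(0:_PI)$ as well. Taking $P=M$ and using $\Gamma_{\m}(0:_M I^t)=0:_{\Gamma_{\m}(M)}I^t$, I record that $\Ndim N_t=\Ndim(0:_MI)=:d$ and $\Ndim\Gamma_{\m}(N_t)=\Ndim(0:_{\Gamma_{\m}(M)}I)=:e$ are independent of $t$.

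For the vanishing parts: in (i), the hypothesis $d\ne1$ makes the Cuong--Nam formula \cite[4.10]{cuoalo} applicable to each semi-discrete linearly compact $N_t$, giving $\max\{i\mid H^{\m}_i(N_t)\ne0\}=\Ndim N_t=d$; hence $H^{\m}_i(N_t)=0$ for $i>d$, and passing to the direct limit $\mathfrak{F}^I_i(M)=0$ for $i>d$. In (ii), the hypothesis forces $0:_{\Gamma_{\m}(M)}I\ne0$, so by the stabilization $\Gamma_{\m}(N_t)\ne0$ for all $t$, and the companion part of \cite[4.10]{cuoalo} gives $\max\{i\mid H^{\m}_i(N_t)\ne0\}=\Ndim\Gamma_{\m}(N_t)=e$; again $\mathfrak{F}^I_i(M)=0$ for $i>e$.

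The non-vanishing parts are the crux, and the point is that the transition maps $H^{\m}_d(N_t)\to H^{\m}_d(N_{t+1})$ (resp.\ $H^{\m}_e(N_t)\to H^{\m}_e(N_{t+1})$) are \emph{injective}. From the exact sequence $0\to N_t\to N_{t+1}\to N_{t+1}/N_t\to0$ and the long exact sequence of local homology, injectivity is equivalent to $H^{\m}_{d+1}(N_{t+1}/N_t)=0$ (resp.\ $H^{\m}_{e+1}(N_{t+1}/N_t)=0$). In case (i) this holds because $\Ndim(N_{t+1}/N_t)\le\Ndim N_{t+1}=d$ and the local homology of a semi-discrete linearly compact module vanishes above its Noetherian dimension \cite{cuoalo}. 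In case (ii) one uses instead the embedding $N_{t+1}/N_t\hookrightarrow(0:_MI)^{s}$ from the stabilization lemma, which gives $\Gamma_{\m}(N_{t+1}/N_t)\hookrightarrow(0:_{\Gamma_{\m}(M)}I)^{s}$ and hence $\Ndim\Gamma_{\m}(N_{t+1}/N_t)\le e$; by \cite[4.10]{cuoalo} this forces $H^{\m}_{e+1}(N_{t+1}/N_t)=0$ \emph{provided} $\Gamma_{\m}(N_{t+1}/N_t)\ne0$, and the remaining subcase $\Gamma_{\m}(N_{t+1}/N_t)=0$ --- where that formula says nothing --- must be disposed of separately, using that a semi-discrete linearly compact module without $\m$-torsion has vanishing positive local homology. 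I expect this last subcase to be the main technical obstacle (and presumably it is why (ii), unlike (i), needs no ``$\ne 1$'' exception). Granting the injectivity, $\mathfrak{F}^I_d(M)=\varinjlim_t H^{\m}_d(N_t)$ is the filtered union of the $H^{\m}_d(N_t)$ and therefore contains a copy of $H^{\m}_d(N_1)=H^{\m}_d(0:_MI)$, which is non-zero since $\Ndim(0:_MI)=d\ne1$; similarly $\mathfrak{F}^I_e(M)$ contains $H^{\m}_e(0:_MI)$, non-zero because $\Gamma_{\m}(0:_MI)\ne0$ forces the top non-vanishing local homology of $0:_MI$ into degree $e$. Combining the two parts gives $\max\{i\mid\mathfrak{F}^I_i(M)\ne0\}=d$ in (i) and $=e$ in (ii). (If convenient one may first pass to $\widehat{R}$ via $\mathfrak{F}^I_i(M)\cong\mathfrak{F}^{I\widehat{R}}_i(M)$ for linearly compact $M$, but the cited local-homology results already hold over the non-complete ring $R$.)
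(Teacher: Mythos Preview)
Your argument follows the same arc as the paper's: establish the stabilization $\Ndim(0:_MI^t)=\Ndim(0:_MI)$, apply \cite[4.10]{cuoalo} to each $N_t=0:_MI^t$ to locate the top nonvanishing $H^{\m}_i(N_t)$, and then obtain injectivity of the transition maps $H^{\m}_d(N_t)\to H^{\m}_d(N_{t+1})$ from the long exact sequence together with $H^{\m}_{d+1}(N_{t+1}/N_t)=0$. The genuine difference lies in the stabilization step. The paper first passes to $\widehat R$, proves the artinian case by Matlis duality (reducing to the elementary equality $\dim D(M)/I^tD(M)=\dim D(M)/ID(M)$ for the finitely generated dual $D(M)$), and then deduces the general semi-discrete linearly compact case from Z\"oschinger's structure theorem \cite{zoslin}, writing $M$ as an extension of an artinian module by a finitely generated one. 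Your monomial embedding $(0:_PI^{t+1})/(0:_PI^t)\hookrightarrow(0:_PI)^{s}$ is a cleaner, self-contained route that avoids both duality and structure theory, and it simultaneously delivers the $\Gamma_{\m}$-version needed for (ii). Indeed, for (ii) the paper only writes ``the rest of the proof is analogous to that in the proof of (i)'' without separately justifying $H^{\m}_{e+1}(N_{t+1}/N_t)=0$; your embedding supplies precisely the bound $\Ndim\Gamma_{\m}(N_{t+1}/N_t)\le e$ that makes this step explicit, so here your argument is in fact more detailed than the paper's. One small correction to your commentary: the subcase $\Gamma_{\m}(N_{t+1}/N_t)=0$ you flag is not what removes the ``$\ne 1$'' restriction in (ii); that restriction is absent simply because \cite[4.10(i)]{cuoalo}, invoked in (ii), carries no such hypothesis, whereas \cite[4.10(ii)]{cuoalo}, invoked in (i), does.
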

\begin{proof}
(i). We begin by proving that
$$\Ndim(0:_MI^t) = \Ndim(0:_MI)$$ for all $t>0.$
As $M$ is a semi-discrete \lc\ $R-$module,  it should be noted by
\cite[7.1,7.2]{cuoalo} that $M$ has  a natural structure of
semi-discrete linearly compact module over the ring $\widehat{R}$
and $\Ndim_RM = \Ndim_{\hat{R}}M.$ Thus, we may assume that $(R,\m)$
is a complete ring.
 At first, we prove in the special case when $M$ is
artinian. Then $D(M)$ is a finitely generated $R-$module by Matlis
dual. We have $$\dim D(M)/I^tD(M) = \dim D(M)/ID(M).$$ Combining
\ref{L:msbmdcpttnrr} with \cite[7.4]{cuoalo} yields
\begin{align*}
\Ndim(0:_MI^t) & = \dim D(0:_MI^t)\\
 &= \dim D(M)/I^tD(M)\\
& = \dim D(M)/ID(M)\\
 & = \dim D(0:_MI) = \Ndim(0:_MI).
\end{align*}
We now assume that $M$ is a semi-discrete \lc\ $R-$module. By
\cite[Theorem]{zoslin} there is  a short exact sequence $$0
\longrightarrow N \longrightarrow M \longrightarrow A
\longrightarrow 0,$$ where $N$ is finitely generated and $A$ is
artinian. It induces an exact sequence
$$0 \longrightarrow 0:_NI^t \overset{f}\longrightarrow 0:_MI^t \overset{g}\longrightarrow
0:_AI^t \overset{\delta}\longrightarrow {\Ext}^1_R(R/I^t; N).$$ Then
we have two short exact sequence
$$0 \longrightarrow 0:_NI^t \overset{f}\longrightarrow 0:_MI^t \longrightarrow \Im g \longrightarrow 0,$$
$$0 \longrightarrow \Im g \longrightarrow
0:_AI^t \longrightarrow \Im \delta \longrightarrow 0.$$ Since
$0:_NI^t$ and $\Im \delta$ are finitely generated $R-$modules, we
get $\Ndim(0:_NI^t) = \Ndim \Im\delta = 0.$ It follows that
\begin{align*}
\Ndim(0:_MI^t) &= \Ndim\Im g \\
&= \Ndim(0:_AI^t) \\
&= \Ndim(0:_AI) = \Ndim(0:_MI).
\end{align*}
Now, it follows from  \cite[4.10 (ii)]{cuoalo} that
 $$d=\Ndim (0:_MI)=\Ndim (0:_MI^t)= \max \big\{ i\mid
H^{\m}_i(0:_MI^t)\not=0\big\}$$for all $t>0.$ Then
$H^{\m}_d(0:_MI^t)\not=0$ and $H^{\m}_i(0:_MI^t) = 0$ for all $i>d$
and  $t>0.$ The short exact sequences for all $t>0$
$$0\longrightarrow 0:_MI^t \longrightarrow 0:_MI^{t+1} \longrightarrow 0:_MI^{t+1}/0:_MI^t \longrightarrow
0$$induce exact sequences $$... H^{\m}_{d+1}(0:_MI^{t+1}/0:_MI^t)
 \longrightarrow H^{\m}_{d}(0:_MI^t) \longrightarrow H^{\m}_{d}(0:_MI^{t+1}) ... $$by \cite[3.7]{cuoalo}. As $\Ndim(0:_MI^{t+1}/0:_MI^t)\leq d,$
 \cite[4.8]{cuoalo} shows that $H^{\m}_{d+1}(0:_MI^{t+1}/0:_MI^t)=0.$
 Then the homomorphisms $$H^{\m}_{d}(0:_MI^t) \longrightarrow
 H^{\m}_{d}(0:_MI^{t+1})$$are injective for all $t>0.$ Therefore $\limt
 H^{\m}_{d}(0:_MI^t)\not=0$ and $\limt
 H^{\m}_{i}(0:_MI^t)=0$ for all $i>d.$ Hence (i) is proved.

 (ii). It should is clear  that $0:_{\Gamma_{\m}(M)}I =
 \Gamma_{\m}(0:_{M}I).$   Set $d=\Ndim
(0:_{\Gamma_{\m}(M)}I)=\Ndim (0:_{\Gamma_{\m}(M)}I^t).$ From
 \cite[4.10(i)]{cuoalo} we have
$$d=\Ndim (0:_{\Gamma_{\m}(M)}I^t) = \Ndim \Gamma_{\m}(0:_{M}I^t) = \max \big\{ i\mid
H^{\m}_i(0:_MI^t)\not=0\big\}.$$ The rest of the proof is
  analogous to that in the proof of (i).
\end{proof}

\begin{corollary}\label{C:hqdlkttdddpfm}  Let $M$ be an artinian $R-$module such that $0:_{M}I\not=0.$ Then
  $$\Ndim (0:_MI) = \max \{ i
\mid \mathfrak{F}^I_{i}(M) \not=0 \}.$$
\end{corollary}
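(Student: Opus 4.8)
The plan is to derive Corollary \ref{C:hqdlkttdddpfm} as an immediate special case of Theorem \ref{L:dlkttdddpfm}. The key observation is that an artinian $R$-module $M$ is in particular a nonzero semi-discrete linearly compact $R$-module (it is linearly compact with the discrete topology, and discrete modules are semi-discrete), so the machinery of Theorem \ref{L:dlkttdddpfm} applies. The only thing that has to be checked is that the numerical side conditions in parts (i) and (ii) of that theorem are automatically satisfied, or can be circumvented, when $M$ is artinian and $0:_M I \neq 0$.

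First I would record that $0:_M I$ is a nonzero submodule of the artinian module $M$, hence itself artinian, hence $\Ndim(0:_M I) = 0$. This places us outside the excluded value $\Ndim(0:_M I) = 1$ in part (i), and indeed in the boundary case $\Ndim(0:_M I) = 0$, which is explicitly allowed there ($0 \leq \Ndim(0:_M I) \neq 1$). Therefore Theorem \ref{L:dlkttdddpfm}(i) gives directly
$$\Ndim(0:_M I) = \max\{\, i \mid \mathfrak{F}^I_i(M) \neq 0 \,\},$$
which is exactly the assertion of the corollary. (As a sanity check, the left-hand side is $0$, and on the right-hand side Proposition \ref{P:mdcpitddfmb0} — or rather Corollary \ref{C:dbatcpitddfmb0} applied after noting $\Gamma_I(M)=M$ — confirms $\mathfrak{F}^I_0(M) \cong M \neq 0$ while $\mathfrak{F}^I_i(M) = 0$ for $i \neq 0$, so the maximum is indeed $0$.)

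I do not anticipate a genuine obstacle here; the content is entirely in Theorem \ref{L:dlkttdddpfm}, and the corollary is a matter of verifying hypotheses. The one point requiring a word of care is the hypothesis $0:_M I \neq 0$: without it $\Ndim(0:_M I) = -1$ and the statement degenerates (the right-hand maximum over an empty set, together with $\mathfrak{F}^I_i(M) = 0$ for all $i$, would be vacuous or conventionally $-1$). With $0:_M I \neq 0$ assumed, everything goes through, and the proof is essentially the two lines: $M$ is a nonzero semi-discrete linearly compact module, $\Ndim(0:_M I) = 0 \neq 1$, apply Theorem \ref{L:dlkttdddpfm}(i).
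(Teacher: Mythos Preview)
Your argument contains a genuine error: you assert that since $0:_M I$ is artinian, $\Ndim(0:_M I) = 0$. This confuses Noetherian dimension with Krull dimension. By the definition recalled just before Proposition~\ref{P:mdcpitddfmb0}, $\Ndim N = 0$ if and only if $N$ is nonzero and \emph{finitely generated}; an artinian module over a local noetherian ring need not be finitely generated, and in fact $\Ndim$ of an artinian module can be any non-negative integer (for instance, over a complete local ring one has $\Ndim M = \dim D(M)$). So your reduction to the single case $\Ndim(0:_M I)=0$ and your appeal to part~(i) of Theorem~\ref{L:dlkttdddpfm} in that case alone do not cover the corollary.

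The paper instead uses part~(ii): for an artinian $M$ one has $\Gamma_{\m}(M)=M$, so $0:_{\Gamma_{\m}(M)}I = 0:_M I$, and part~(ii) gives the desired equality whenever $\Ndim(0:_M I)\neq 0$. The remaining case $\Ndim(0:_M I)=0$ is exactly the one your argument does handle correctly (via part~(i) or, equivalently, Proposition~\ref{P:mdcpitddfmb0}). So the fix is to drop the false blanket claim that $\Ndim(0:_M I)=0$, observe $\Gamma_{\m}(M)=M$, and split into the two cases $\Ndim(0:_M I)=0$ and $\Ndim(0:_M I)\geq 1$, invoking parts~(i) and~(ii) respectively.
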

\begin{proof}
It should be noted that an artinian $R-$module is semi-discrete
linearly compact. Moreover, $\Gamma_{\m}(M) = M.$ Therefore the
conclusion follows from \ref{L:dlkttdddpfm} (ii).
\end{proof}
\bigskip

\section{The finiteness  of formal local homology
modules}\label{S:msdkhhdpfm}
\medskip

We begin by recalling the concept of {\it co-support} of an module.
The co-support $\Cosupp_R(M)$ of an $R-$module $M$ is the set of
primes $\p$ such that there exists a cocyclic homomorphic image $L$
of $M$ with $\Ann(L) \subseteq \p$ (\cite[2.1]{yascoa}). Note that a
module is cocyclic if it is a submodule of $E(R/\m)$ for some
maximal ideal $\m\subset R.$

\noindent If $0\lr N \lr M \lr K \lr 0$ is an exact sequence of
$R-$modules, then $\Cosupp_R(M) = \Cosupp_R(N) \cup \Cosupp_R(K)$
(\cite[2.7]{yascoa}).

\begin{lemma}\label{L:cosddfmgvicvm} Let   $M$ be  an artinian
$R-$module. Then
$${\Cosupp}_R(\mathfrak{F}^I_{i}(M))\bigcap V(I)\subseteq V(\m).$$
\end{lemma}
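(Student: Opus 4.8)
The plan is to reduce to a complete base ring, move the question across the dual theorem to P.~Schenzel's formal local cohomology modules, and finish with a support estimate there.

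\emph{Reduction to the complete case.} By Corollary~\ref{C:hqlydnfm} one has $\mathfrak{F}^I_i(M)\cong\mathfrak{F}^{I\widehat{R}}_i(M)$ as $\widehat{R}$-modules. Since $\widehat{R}$ is faithfully flat over $R$, the contraction to $R$ of $V(I\widehat{R})$ is $V(I)$ and that of $\m\widehat{R}$ is $\m$, so it is enough to prove the inclusion over $\widehat{R}$; hence assume $(R,\m)$ complete. Then $M^{*}=D(M)$ is finitely generated by Lemma~\ref{L:mdnrrdbsaoathhs}, and Corollary~\ref{C:hqdldnddddfm} gives $\mathfrak{F}^I_i(M)\cong\mathfrak{F}^i_I(M^{*})^{*}$.

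\emph{From cosupport to support.} As $R$ is local, a module is cocyclic exactly when it embeds in $E(R/\m)$, so a cocyclic homomorphic image of $X$ is $\im\varphi$ for some nonzero $\varphi\in D(X)=\Hom_R(X,E(R/\m))$, and since $\Ann_R(\im\varphi)=\Ann_R(R\varphi)$ one gets $\Cosupp_R(X)=\bigcup_{0\neq\varphi\in D(X)}\Supp_R(R\varphi)=\Supp_R(D(X))$ for every $R$-module $X$. Applying this to $X=\mathfrak{F}^I_i(M)$, and using (as in the proof of Theorem~\ref{T:dldnddddfm}, via \cite[6.4,\,6.7]{cuoalo}) that Matlis duality carries the direct limit $\mathfrak{F}^I_i(M)=\varinjlim_t H^{\m}_i(0:_MI^t)$ onto the inverse limit $\mathfrak{F}^i_I(M^{*})=\varprojlim_t H^i_{\m}(M^{*}/I^tM^{*})$, i.e. $D(\mathfrak{F}^I_i(M))\cong\mathfrak{F}^i_I(M^{*})$, the claim becomes
$$\Supp_R\bigl(\mathfrak{F}^i_I(N)\bigr)\cap V(I)\subseteq V(\m),\qquad N:=M^{*}\ \text{finitely generated.}$$

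\emph{The support estimate.} For this write $\m=(y_1,\dots,y_e)$. The transition maps of the system $\{\check{C}^{\bullet}(N/I^tN)\}_t$ of \v{C}ech complexes for $y_1,\dots,y_e$ are surjective, so $\varprojlim^{1}$ of it vanishes term by term and $\mathfrak{F}^i_I(N)\cong H^i\bigl(\varprojlim_t\check{C}^{\bullet}(N/I^tN)\bigr)$, the complex with degree-$j$ term $\bigoplus_{|\sigma|=j}\widehat{(N_{y_\sigma})}^{\,I}$, the $I$-adic completion of the localisation $N_{y_\sigma}$. For a prime $\mathfrak{p}\supseteq I$ with $\mathfrak{p}\neq\m$ one picks $y_k\in\m\smallsetminus\mathfrak{p}$ and must show that this complex, localised at $\mathfrak{p}$, is exact in degree $i$, which gives $(\mathfrak{F}^i_I(N))_{\mathfrak{p}}=0$ and hence the inclusion. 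This exactness is the step I expect to be the main obstacle: the naive argument that the \v{C}ech terms indexed by sets $\sigma$ containing $k$ vanish is only valid when $I$ is $\m$-primary, so for general $I$ one has to analyse the action of $\m\smallsetminus\mathfrak{p}$ on the completed localisations $\widehat{(N_{y_\sigma})}^{\,I}$ and check that after inverting $R\smallsetminus\mathfrak{p}$ those elements act invertibly on the relevant terms. Everything preceding this step is bookkeeping built on the dual theorem and the cosupport--support identity.
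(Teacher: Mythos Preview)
Your overall strategy matches the paper's proof exactly: reduce to the complete case, invoke the dual theorem to identify $\mathfrak{F}^I_i(M)$ with the (Macdonald/Matlis) dual of $\mathfrak{F}_I^i(M^*)$, convert cosupport into support via the identity $\Cosupp_R(X)=\Supp_R(D(X))$ (this is \cite[2.9]{yascoa}, which the paper cites rather than reproves), and then reduce to a support estimate for formal local cohomology of the finitely generated module $N=M^*$.

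The gap is precisely the step you flag as the ``main obstacle'': the inclusion
\[
\Supp_R\bigl(\mathfrak{F}_I^{i}(N)\bigr)\cap V(I)\subseteq V(\m)
\]
for finitely generated $N$. The paper does \emph{not} reprove this via \v{C}ech complexes; it simply invokes the proof of \cite[4.3]{schonf}, where Schenzel establishes exactly this support estimate. Your attempted direct argument is unnecessary (and, as you correctly note, the naive version fails for non-$\m$-primary $I$). Replacing your final paragraph with a citation to \cite[4.3]{schonf} completes the proof and makes it coincide with the paper's.

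One cosmetic difference: the paper reaches $\Supp_R(\mathfrak{F}_I^i(M^*))$ via the chain
\[
\Cosupp_R(\mathfrak{F}^I_i(M))=\Cosupp_R\bigl((\mathfrak{F}_I^i(M^*))^*\bigr)\subseteq\Cosupp_R D(\mathfrak{F}_I^i(M^*))=\Supp_R(\mathfrak{F}_I^i(M^*)),
\]
using $X^*\subseteq D(X)$, whereas you compute $D(\mathfrak{F}^I_i(M))\cong\mathfrak{F}_I^i(M^*)$ directly. Both are fine and give the same conclusion.
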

\begin{proof} By \ref{R:lydnfm} we may assume that $(R,\m)$ is a complete ring It should be noted that $M^*$ is a finitely generated $R-$module. Combining \ref{C:hqdldnddddfm} with \cite[2.9]{yascoa}
yields
\begin{align*}
{\Cosupp}_R(\mathfrak{F}^I_{i}(M)) & =
{\Cosupp}_R(\mathfrak{F}_I^{i}(M^*)^*)\\
&\subseteq {\Cosupp}_RD((\mathfrak{F}_I^{i}(M^*))\\
& = {\Supp}_R(\mathfrak{F}_I^{i}(M^*).
\end{align*}
 By the proof of \cite[4.3]{schonf} we have
\begin{align*}
 {\Cosupp}_R(\mathfrak{F}^I_{i}(M))\bigcap V(I) &\subseteq {\Supp}_R(\mathfrak{F}_I^{i}(M^*)\bigcap
 V(I)\\
 &\subseteq V(\m).
\end{align*} The proof is complete.
\end{proof}

We have the following equivalent properties for formal local
homology modules $\mathfrak{F}^I_{i}(M)$ being finitely generated
for all $i<s.$

\begin{theorem}\label{L:dktthhsrdcddfm}
Let $M$ be an artinian $R-$module and $s$ a positive integer. Then
the following statements are equivalent:
\begin{itemize}
\item[(i)] $\mathfrak{F}^I_{i}(M)$ is finitely generated for all
$i<s;$
\item[(ii)] $I\subseteq \sqrt{0: {\mathfrak{F}^I_{i}(M)}}$ for all
$i<s.$
\end{itemize}
\end{theorem}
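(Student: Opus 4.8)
The plan is to prove the two implications (i)$\Rightarrow$(ii) and (ii)$\Rightarrow$(i) separately, using the dual theorem to transfer the problem to the formal local cohomology side, where finiteness statements are better understood, and using Theorem \ref{T:ddddflb0} together with Lemma \ref{L:cosddfmgvicvm} to control the relevant supports. Throughout, by Lemma \ref{R:lydnfm} (the version for \lc\ modules) and Corollary \ref{C:hqlydnfm} we may assume $(R,\m)$ is complete, so that $M^*$ is a finitely generated $R$-module and Corollary \ref{C:hqdldnddddfm} gives $\mathfrak{F}^I_{i}(M)\cong \mathfrak{F}_I^{i}(M^*)^*$ for all $i$.

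For (i)$\Rightarrow$(ii): if $\mathfrak{F}^I_{i}(M)$ is finitely generated, then since by Theorem \ref{T:ddddflb0} we have $\mathfrak{F}^I_{i}(M)=\Gamma_I(\mathfrak{F}^I_{i}(M))=\bigcup_{t>0}(0:_{\mathfrak{F}^I_{i}(M)}I^t)$, a finitely generated such module is annihilated by some power $I^n$, which is exactly the statement $I\subseteq\sqrt{0:\mathfrak{F}^I_{i}(M)}$. So this direction is immediate from Theorem \ref{T:ddddflb0} and needs no duality.

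For (ii)$\Rightarrow$(i): the hypothesis $I\subseteq\sqrt{0:\mathfrak{F}^I_{i}(M)}$ for all $i<s$ should be translated, via $\mathfrak{F}^I_{i}(M)\cong \mathfrak{F}_I^{i}(M^*)^*$ and Matlis/Macdonald duality, into a statement about the finitely generated module $\mathfrak{F}_I^{i}(M^*)$ — namely that it is $I$-torsion-free in the appropriate sense, or more precisely that $\mathfrak{F}_I^{i}(M^*)$ is supported away from $V(I)$ except possibly at $\m$; combined with Lemma \ref{L:cosddfmgvicvm}, which already gives $\Cosupp_R(\mathfrak{F}^I_{i}(M))\cap V(I)\subseteq V(\m)$, the hypothesis forces $\mathfrak{F}_I^{i}(M^*)$ to have support inside $V(\m)$ for $i<s$, hence to be a finite-length $R$-module for $i<s$. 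I would then run an induction on $s$: assuming $\mathfrak{F}_I^{i}(M^*)$ has finite length for $i<s$, invoke a result on formal local cohomology (in the style of \cite{schonf}, analogous to Faltings-type finiteness for local cohomology) to conclude that $\mathfrak{F}_I^{s-1}(M^*)$ being finite length propagates appropriately, and then dualize back: the Macdonald dual of a finite-length module is finite-length, and in particular finitely generated, giving that $\mathfrak{F}^I_{i}(M)$ is finitely generated for all $i<s$.

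The main obstacle will be the middle step of (ii)$\Rightarrow$(i): precisely extracting from $I\subseteq\sqrt{0:\mathfrak{F}^I_{i}(M)}$ the conclusion that the support of the dual finitely generated module $\mathfrak{F}_I^{i}(M^*)$ is contained in $V(\m)$. The delicate point is that annihilator information on $\mathfrak{F}^I_{i}(M)=\mathfrak{F}_I^{i}(M^*)^*$ must be converted into support information on $\mathfrak{F}_I^{i}(M^*)$ itself; this uses that for a finitely generated module $N$ over complete $(R,\m)$ one has $\Supp_R N = \Cosupp_R D(N)$ and that $\Ann_R N = \Ann_R D(N)$, so $\sqrt{\Ann_R \mathfrak{F}_I^{i}(M^*)} = \sqrt{\Ann_R \mathfrak{F}^I_{i}(M)}\supseteq I$, whence $\Supp_R\mathfrak{F}_I^{i}(M^*)\subseteq V(I)$, and then Lemma \ref{L:cosddfmgvicvm} (read through the duality $\Cosupp_R\mathfrak{F}^I_{i}(M)\subseteq\Supp_R\mathfrak{F}_I^{i}(M^*)$ used in its proof) pins the support down to $V(\m)$. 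Once the support is in $V(\m)$, a finitely generated module is of finite length, and the descent back through Macdonald duality to get finite generation of $\mathfrak{F}^I_{i}(M)$ is routine via Lemma \ref{L:mdnrrdbsaoathhs} and Lemma \ref{L:msbmdcpttnrr}.
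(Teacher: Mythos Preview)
Your argument for (i)$\Rightarrow$(ii) is correct and is exactly what the paper does.

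For (ii)$\Rightarrow$(i), however, there is a genuine gap: you repeatedly speak of ``the finitely generated module $\mathfrak{F}_I^{i}(M^*)$'', but formal local cohomology modules of finitely generated modules are \emph{not} finitely generated in general. Indeed $\mathfrak{F}_I^{i}(M^*)=\varprojlim_t H^i_{\m}(M^*/I^tM^*)$ is an inverse limit of artinian modules and hence linearly compact (Lemma~\ref{L:dcltcsdcltddfm}(ii)), but typically neither artinian nor noetherian; already for $I=0$ it is $H^i_{\m}(M^*)$, which is artinian but almost never finitely generated. Consequently the step ``support contained in $V(\m)$, hence finite length'' fails, since that implication requires finite generation. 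What you would really need on the dual side is that $\mathfrak{F}_I^{i}(M^*)$ is \emph{artinian} for all $i<s$ (so that its Macdonald dual is finitely generated by Lemma~\ref{L:mdnrrdbsaoathhs}); deducing this from the annihilator hypothesis is precisely the cohomological analogue of the theorem you are trying to prove, and it is not a consequence of support considerations alone. Your appeal to ``a result in the style of \cite{schonf}'' does not supply it.

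The paper avoids duality for (ii)$\Rightarrow$(i) and argues by induction on $s$ directly on the homology side. For $s=1$ an explicit computation shows that $\mathfrak{F}^I_0(M)$ is a quotient of $M$, hence artinian, and then Lemma~\ref{L:cosddfmgvicvm} together with the hypothesis forces its cosupport into $V(\m)$, giving finite length. For $s>1$ one passes to $K=I^mM$ (the stable $I$-power submodule), so that $\mathfrak{F}^I_i(K)\cong\mathfrak{F}^I_i(M)$ for $i>0$; since $IK=K$, \cite[2.8]{macsec} produces $x\in I$ with $xK=K$, and the long exact sequence of Theorem~\ref{T:dkhdddfmartin} attached to $0\to 0:_K x^r\to K\xrightarrow{x^r} K\to 0$ sandwiches $\mathfrak{F}^I_i(K)$ inside $\mathfrak{F}^I_{i-1}(0:_K x^r)$, to which the inductive hypothesis applies.
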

\begin{proof}
$(i)\Rightarrow (ii).$ For $i<s,$ as $\mathfrak{F}^I_{i}(M)$ is
finitely generated, the increasing chain of submodules of
$\mathfrak{F}^I_{i}(M)$
$$0:_{\mathfrak{F}^I_{i}(M)}I\subseteq 0:_{\mathfrak{F}^I_{i}(M)}I^2\subseteq ... \subseteq 0:_{\mathfrak{F}^I_{i}(M)}I^t\subseteq ...$$
is stationary. Thus, there is a positive integer $r$ such that
$0:_{\mathfrak{F}^I_{i}(M)}I^t=0:_{\mathfrak{F}^I_{i}(M)}I^r$ for
all $t\geq r.$
 It follows from \ref{T:ddddflb0} that
$$\mathfrak{F}^I_{i}(M) = \underset{t>0}\bigcup
(0:_{\mathfrak{F}^I_{i}(M)}I^t) =
0:_{\mathfrak{F}^I_{i}(M)}I^r.$$Therefore $I^r\mathfrak{F}^I_{i}(M)
= 0$ and then  $I\subseteq \sqrt{0: {\mathfrak{F}^I_{i}(M)}}.$

$(ii)\Rightarrow (i).$ We use induction on $s.$ When $s=1,$ we have
$$\mathfrak{F}^I_{0}(M) = \limt \Lambda_{\m}(0:_MI^t) =
\limt\inlimk(0:_MI^t)/{\m}^k(0:_MI^t).$$ As $0:_MI^t$ is artinian,
there is $k_t$ such that ${\m}^k(0:_MI^t) = {\m}^{k_t}(0:_MI^t)$ for
all $k\geq k_t.$ Then
$$\inlimk(0:_MI^t)/{\m}^k(0:_MI^t) = (0:_MI^t)/{\m}^{k_t}(0:_MI^t)\
\text{and}$$
\begin{align*}
\mathfrak{F}^I_{0}(M) &= \limt (0:_MI^t)/{\m}^{k_t}(0:_MI^t)\\
&\cong \limt(0:_MI^t)/\limt{\m}^{k_t}(0:_MI^t) =
M/\limt{\m}^{k_t}(0:_MI^t).\end{align*} Thus,
$\mathfrak{F}^I_{0}(M)$ is artinian and then
${\Cosupp}_R(\mathfrak{F}^I_{0}(M)) = V(0: \mathfrak{F}^I_{0}(M))$
by \cite[2.3]{yascoa}. Moreover, from the hypothesis we have
$${\Cosupp}_R(\mathfrak{F}^I_{0}(M)) \subseteq V(I).$$
 It follows from \ref{L:cosddfmgvicvm} that
$${\Cosupp}_R(\mathfrak{F}^I_{0}(M))=V(0: \mathfrak{F}^I_{0}(M))\subseteq
V(\m)$$ and then $\mathfrak{F}^I_{0}(M)$ has finite length.

Let $s>1.$ As $M$ is artinian, there is a positive integer $m$ such
that $I^tM = I^mM$ for all $t \geq  m.$ Set $K = I^mM,$ then the
short exact sequence of artinian $R-$modules $$0 \longrightarrow K
\longrightarrow M \longrightarrow  M/K \longrightarrow  0$$ gives
rise to a long exact sequence of formal local homology modules by
\ref{T:dkhdddfmartin}
$$... \longrightarrow \mathfrak{F}^I_{i+1}(M/K)\longrightarrow \mathfrak{F}^I_{i}(K)\longrightarrow \mathfrak{F}^I_{i}(M)
\longrightarrow \mathfrak{F}^I_{i}(M/K)\longrightarrow ....$$ It is
clear that $M/K$ is $I-$separated, then $\mathfrak{F}^I_{i}(M/K) =0$
for all $i>0$ and $\mathfrak{F}^I_{0}(M/K) \cong M/K$ by
\ref{C:dbatcpitddfmb0}. It follows that $\mathfrak{F}^I_{i}(K)\cong
\mathfrak{F}^I_{i}(M)$ for all $i>0$ and there is an exact sequence
$$0 \longrightarrow \mathfrak{F}^I_{0}(K)\longrightarrow \mathfrak{F}^I_{0}(M)
\longrightarrow \mathfrak{F}^I_{0}(M/K) \longrightarrow 0.$$Thus,
the proof will be complete if we show that $\mathfrak{F}^I_{i}(K)$
is finitely generated for all $i < s.$ By the hypothesis, we have
$I\subseteq \sqrt{0: {\mathfrak{F}^I_{i}(K)}}$ for all $i<s.$ Since
$IK = K,$ there is an element $x \in I$ such that $xK = K$ by
\cite[2.8]{macsec}. Then there is a positive integer $r$ such that
$x^r\mathfrak{F}^I_{i}(K)  = 0$ for all $i < s.$ Now the short exact
sequence $0 \longrightarrow 0:_K x^r \longrightarrow K
\overset{x^r}\longrightarrow K \longrightarrow 0$ induces a short
exact sequence of formal local homology modules
$$0 \longrightarrow \mathfrak{F}^I_{i}(K)\longrightarrow \mathfrak{F}^I_{i-1}(0:_K x^r)\longrightarrow \mathfrak{F}^I_{i-1}(K)\longrightarrow 0$$
for all $i < s.$ It follows $I\subseteq
\sqrt{0:\mathfrak{F}^I_{i-1}(0:_K x^r)}$ for all $i < s.$ By the
inductive hypothesis $\mathfrak{F}^I_{i-1}(0:_K x^r)$ is finitely
generated for all $i<s.$ Therefore $\mathfrak{F}^I_{i}(K)$ is
finitely generated for all $i<s$ and the proof is complete.
\end{proof}

The following theorem gives us the  equivalent properties for formal
local homology modules $\mathfrak{F}^I_{i}(M)$ being finitely
generated for all $i>s.$

\begin{theorem}\label{L:lhsdktthhsrdcddfm}
Let $M$ be an artinian $R-$module and $s$ a non-negative integer.
Then the following statements are equivalent:
\begin{itemize}
\item[(i)] $\mathfrak{F}^I_{i}(M)$ is finitely generated for all
$i>s;$
\item[(ii)] $I\subseteq \sqrt{0: {\mathfrak{F}^I_{i}(M)}}$ for all
$i>s.$
\end{itemize}
\end{theorem}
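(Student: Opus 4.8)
The plan is to mirror the structure of the proof of Theorem~\ref{L:dktthhsrdcddfm}, since the two statements are formally dual in character, but to organise the induction in the opposite direction (descending on homological degree rather than ascending). The implication $(i)\Rightarrow(ii)$ should go through essentially verbatim: for any $i>s$, if $\mathfrak{F}^I_i(M)$ is finitely generated then the ascending chain $0:_{\mathfrak{F}^I_i(M)}I^t$ stabilises, and by Theorem~\ref{T:ddddflb0} we have $\mathfrak{F}^I_i(M)=\Gamma_I(\mathfrak{F}^I_i(M))=\bigcup_{t>0}(0:_{\mathfrak{F}^I_i(M)}I^t)=0:_{\mathfrak{F}^I_i(M)}I^r$ for some $r$, hence $I^r\mathfrak{F}^I_i(M)=0$ and $I\subseteq\sqrt{0:\mathfrak{F}^I_i(M)}$.

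For the harder direction $(ii)\Rightarrow(i)$ I would first reduce, exactly as before, to the case $(R,\m)$ complete, and then note that by Remark~\ref{R:lydnfm}(ii)/the discussion there, $\mathfrak{F}^I_i(M)=0$ for $i$ sufficiently large once $M$ is artinian (this is where one needs that $0:_M I^t$ is artinian, hence $H^{\m}_i(0:_M I^t)$ is a noetherian $\widehat R$-module vanishing above $\Ndim(0:_M I^t)\le \Ndim M=:d$; so $\mathfrak{F}^I_i(M)=0$, in particular finitely generated, for all $i>d$). This anchors a \emph{descending} induction: having shown $\mathfrak{F}^I_j(M)$ finitely generated for all $j>i$, show it for $i$ (when $i>s$). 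As in the previous proof, set $K=I^mM$ for $m$ large so that $I^tM=I^mM$ for $t\ge m$; then $M/K$ is $I$-separated, so by Corollary~\ref{C:dbatcpitddfmb0} $\mathfrak{F}^I_j(M/K)=0$ for $j>0$ and $\mathfrak{F}^I_0(M/K)\cong M/K$, whence $\mathfrak{F}^I_j(K)\cong\mathfrak{F}^I_j(M)$ for all $j>0$. Since $IK=K$, by \cite[2.8]{macsec} there is $x\in I$ with $xK=K$, and from $(ii)$ (transferred to $K$) there is $r$ with $x^r\mathfrak{F}^I_j(K)=0$ for all $j>s$; the short exact sequence $0\to 0:_K x^r\to K\xrightarrow{x^r}K\to 0$ induces, via Theorem~\ref{T:dkhdddfmartin}, short exact sequences $0\to\mathfrak{F}^I_j(K)\to\mathfrak{F}^I_{j-1}(0:_K x^r)\to\mathfrak{F}^I_{j-1}(K)\to 0$ for $j>s$. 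From these one reads off $I\subseteq\sqrt{0:\mathfrak{F}^I_{j-1}(0:_K x^r)}$ for all $j>s$, i.e. for the index $j-1\ge s$; combined with the descending inductive hypothesis applied to the artinian module $0:_K x^r$ (whose relevant formal local homology in degrees $>j$ is finitely generated because it is squeezed between $\mathfrak{F}^I_{\bullet}(K)$'s), we get $\mathfrak{F}^I_{j-1}(0:_K x^r)$ finitely generated, and then the same exact sequence forces $\mathfrak{F}^I_j(K)$ finitely generated, hence $\mathfrak{F}^I_j(M)$ finitely generated, for all $j>s$.

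The main obstacle, and the point that needs the most care, is setting up the induction correctly so that it actually terminates: unlike Theorem~\ref{L:dktthhsrdcddfm}, where the induction is on $s$ and bottoms out at $s=1$ via an explicit finite-length computation of $\mathfrak{F}^I_0$, here one must induct downward on the degree and the base case is the vanishing $\mathfrak{F}^I_i(M)=0$ for $i\gg0$. One has to check that at each stage the hypothesis $(ii)$ is genuinely available for the auxiliary module $0:_K x^r$ in the right range of degrees, and that the short exact sequence lets the finiteness propagate from degree $j-1$ up to degree $j$ rather than the reverse; a careless indexing will break the argument. A secondary subtlety is ensuring $\mathfrak{F}^I_j(0:_K x^r)$ is finitely generated in the high degrees needed to start the inner induction — this follows because $0:_K x^r$ is artinian so its formal local homology also vanishes above its Noetherian dimension, so the same top-degree vanishing applies uniformly. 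Once the bookkeeping of indices is pinned down, every individual step is a routine invocation of the tools already assembled (Theorems~\ref{T:ddddflb0} and \ref{T:dkhdddfmartin}, Corollary~\ref{C:dbatcpitddfmb0}, Lemma~\ref{L:cosddfmgvicvm}, and \cite[2.8]{macsec}).
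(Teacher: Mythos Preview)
Your direction $(i)\Rightarrow(ii)$ is fine and matches the paper. The problem is the organisation of $(ii)\Rightarrow(i)$.

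You propose a descending induction on the homological degree $j$, with base case the vanishing of $\mathfrak{F}^I_j(M)$ for $j>\Ndim M$. But the short exact sequence you derive,
\[
0\longrightarrow \mathfrak{F}^I_{j}(K)\longrightarrow \mathfrak{F}^I_{j-1}(0:_K x^r)\longrightarrow \mathfrak{F}^I_{j-1}(K)\longrightarrow 0,
\]
pushes information the wrong way for that scheme. To deduce that $\mathfrak{F}^I_j(K)$ is finitely generated you need $\mathfrak{F}^I_{j-1}(0:_K x^r)$ finitely generated; your suggested justification is to ``squeeze'' it between $\mathfrak{F}^I_j(K)$ and $\mathfrak{F}^I_{j-1}(K)$. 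But $\mathfrak{F}^I_{j-1}(K)$ lies \emph{below} the current step of a descending induction, so it is not yet known to be finitely generated, and $\mathfrak{F}^I_j(K)$ is exactly what you are trying to prove. The dependence is circular, and no amount of index bookkeeping repairs it: a descending induction on degree gives you control only in degrees $>j$, while the exact sequence demands control in degree $j-1$. Invoking ``the descending inductive hypothesis applied to $0:_K x^r$'' does not help either, since an induction on degree for a fixed module gives no hypothesis about a different module, and if one quantifies over all artinian modules simultaneously the same degree mismatch persists.

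The paper avoids this by inducting on $d=\Ndim M$ rather than on the homological degree. The point is that $\Ndim(0:_K y)\le d-1$, so the full statement of the theorem (for every degree $>s-1$) is available for $0:_K y$ by the inductive hypothesis; in particular $\mathfrak{F}^I_{j-1}(0:_K y)$ is finitely generated for every $j>s$, and the injection $\mathfrak{F}^I_j(K)\hookrightarrow\mathfrak{F}^I_{j-1}(0:_K y)$ finishes the argument at once. Once you switch to induction on $\Ndim M$, all of the ingredients you list (Theorems~\ref{T:ddddflb0} and~\ref{T:dkhdddfmartin}, the reduction to $K=I^mM$, the choice of $x\in I$ with $xK=K$) slot in exactly as you describe; only the inductive framework needs to change.
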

\begin{proof}
$(i)\Rightarrow (ii).$ The argument is similar to that used in the
proof of \ref{L:dktthhsrdcddfm}.

$(ii)\Rightarrow (i).$ We now proceed by induction on $d=\Ndim M.$
When $d=0,$ we have $\Ndim(0:_MI) = 0$ and then
$\mathfrak{F}^I_{i}(M)=0$ for all $i>0.$

Let $d>0.$  As $M$ is artinian, there is a positive integer $m$ such
that $I^tM = I^mM$ for all $t \geq  m.$ Set $K = I^mM,$ analysis
similar to that in the proof of \ref{L:dktthhsrdcddfm}  shows that
$$\mathfrak{F}^I_{i}(K)\cong \mathfrak{F}^I_{i}(M)$$ for all $i>0.$
Thus, The proof is completed by showing that $\mathfrak{F}^I_{i}(K)$
is finitely generated for all $i > s.$ From the hypothesis, we have
$I\subseteq \sqrt{0: {\mathfrak{F}^I_{i}(K)}}$ for all $i>s.$ Since
$IK = K,$ there is an element $x \in I$ such that $xK = K$ by
\cite[2.8]{macsec}. Then there is a positive integer $r$ such that
$x^r\mathfrak{F}^I_{i}(K) = 0$ for all $i > s.$ Set $y=x^r,$  the
short exact sequence $$0 \longrightarrow 0:_K y \longrightarrow K
\overset{y}\longrightarrow K \longrightarrow 0$$ induces a short
exact sequence of formal local homology modules
$$0 \longrightarrow \mathfrak{F}^I_{i}(K)\longrightarrow \mathfrak{F}^I_{i-1}(0:_K y)\longrightarrow \mathfrak{F}^I_{i-1}(K)\longrightarrow 0$$
for all $i > s.$ It follows $I\subseteq
\sqrt{0:\mathfrak{F}^I_{i-1}(0:_K y)}$ for all $i > s.$ It should be
noted by \cite[3.7]{cuoalo} that $\Ndim(0:_K y)\leq d-1.$ From the
inductive hypothesis $\mathfrak{F}^I_{i-1}(0:_K y)$ is finitely
generated for all $i>s.$ Therefore $\mathfrak{F}^I_{i}(K)$ is
finitely generated for all $i>s$ and the proof is complete.
\end{proof}

There is an  question: What is the module
$\mathfrak{F}^I_{i}(M)/I\mathfrak{F}^I_{i}(M)$?  When $I$ is a
principal ideal we have  an answer in the following theorem.

\begin{theorem}\label{L:dfmhhshomt}
Let $I$ be a principal ideal of $(R,\m)$ and $M$  an artinian
$R-$module. Then $\mathfrak{F}^I_{i}(M)/I\mathfrak{F}^I_{i}(M)$ is a
noetherian $\widehat{R}-$module for all $i.$
\end{theorem}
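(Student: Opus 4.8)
The plan is to write $I=(x)$ for some $x\in R$ and reduce, via Corollary~\ref{C:hqlydnfm}, to the case where $(R,\m)$ is complete, so that $\mathfrak{F}^I_i(M)$ is an $\widehat R=R$--module. By Corollary~\ref{C:hqdldnddddfm} we have $\mathfrak{F}^I_i(M)\cong \mathfrak{F}_I^i(M^*)^*$, where $M^*=D(M)$ is a finitely generated $R$--module (Lemma~\ref{L:mdnrrdbsaoathhs}). The idea is to transfer the question about $\mathfrak{F}^I_i(M)/I\mathfrak{F}^I_i(M)$ to a question about $0:_{\mathfrak{F}_I^i(M^*)}I$ using Macdonald duality: for a linearly compact module $L$ one has $(L/IL)^*\cong 0:_{L^*}I$ and $(0:_LI)^*\cong L^*/IL^*$, so $\mathfrak{F}^I_i(M)/I\mathfrak{F}^I_i(M)$ is the Macdonald dual of $0:_{\mathfrak{F}_I^i(M^*)}I$. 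Since $\mathfrak{F}_I^i(M^*)$ is linearly compact (Lemma~\ref{L:dcltcsdcltddfm}(ii), as $M^*$ is finitely generated hence semi-discrete linearly compact), $0:_{\mathfrak{F}_I^i(M^*)}I$ is again linearly compact, and its Macdonald dual will be noetherian over $\widehat R$ precisely when the module itself is artinian.

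So the heart of the matter becomes: show $0:_{\mathfrak{F}_I^i(M^*)}I$ is artinian, equivalently (by Matlis duality, since everything here is linearly compact over the complete ring) that $\mathfrak{F}_I^i(M^*)/I\mathfrak{F}_I^i(M^*)$ is noetherian. Here I would invoke Schenzel's structure results on formal local cohomology for the principal-ideal case: when $I=(x)$, Schenzel~\cite{schonf} shows that $\mathfrak{F}_I^i(N)/I\mathfrak{F}_I^i(N)$ is a noetherian $\widehat R$--module for a finitely generated $N$ — this follows because $\mathfrak{F}_I^i(N)=\inlim H_\m^i(N/x^tN)$ is $x$-adically complete and $\mathfrak{F}_I^i(N)/x\mathfrak{F}_I^i(N)$ injects into (a subquotient controlled by) $H_\m^i(N/xN)$ via the short exact sequences $0\to N/x^tN\overset{x}\to N/x^{t+1}N\to N/xN\to 0$, whose local cohomology long exact sequences are finitely generated in the relevant spots. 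Passing to the inverse limit and using that $H_\m^{i}(N/xN)$ and $H_\m^{i+1}(N/xN)$ are artinian, one gets that $\mathfrak{F}_I^i(N)/x\mathfrak{F}_I^i(N)$ is the dual of a finitely generated module, hence noetherian over $\widehat R$.

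The main obstacle I anticipate is the bookkeeping needed to make the duality statement $(L/IL)^*\cong 0:_{L^*}I$ work at the level of the \emph{formal} modules while keeping track of topologies: one must check that $\mathfrak{F}_I^i(M^*)$ carries a linearly compact topology for which $I\cdot(-)$ is continuous and $0:_{(-)}I$ is a closed submodule, so that Macdonald duality applies cleanly (this is exactly where Lemma~\ref{L:dcltcsdcltddfm}(ii) and \cite[6.5, 6.7]{cuoalo} are used). A secondary point is justifying the exchange of $\inlim$ with $-/I(-)$ and with Matlis/Macdonald duality; for this one uses that the transition maps are surjective (Mittag--Leffler) or appeals to the artinianness of the $H_\m^i(N/x^tN)$ so that the relevant inverse systems satisfy the conditions in \cite[\S2]{cuoalo},\cite{macdua}. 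Once these topological compatibilities are in place, the conclusion that $\mathfrak{F}^I_i(M)/I\mathfrak{F}^I_i(M)$ is noetherian over $\widehat R$ for all $i$ follows formally.
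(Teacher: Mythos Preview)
Your duality reduction is correct up to the point where you identify the target as showing that $0:_{\mathfrak{F}_I^i(M^*)}I$ is artinian: with $L=\mathfrak{F}_I^i(M^*)$ linearly compact over the complete ring, one has $(0:_LI)^*\cong L^*/IL^*\cong \mathfrak{F}^I_i(M)/I\mathfrak{F}^I_i(M)$, so artinianness of $0:_LI$ gives what is wanted. The gap is the next sentence. The claimed equivalence ``$0:_LI$ artinian $\Longleftrightarrow$ $L/IL$ noetherian'' is not a Matlis/Macdonald duality statement: duality pairs $0:_LI$ with $L^*/IL^*$, and pairs $L/IL$ with $0:_{L^*}I$. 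These are different conditions on $L$, and for $L=\mathfrak{F}_I^i(N)$ there is no reason they should coincide. So the Schenzel--style statement you aim for, that $\mathfrak{F}_I^i(N)/x\mathfrak{F}_I^i(N)$ is noetherian, is not the one you need; you need $0:_{\mathfrak{F}_I^i(N)}x$ artinian. (Incidentally, your short exact sequence $0\to N/x^tN\xrightarrow{x}N/x^{t+1}N\to N/xN\to 0$ is only exact when $x$ is a non--zero--divisor on $N$, so even the sketch on the cohomology side is incomplete.)

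The paper avoids duality altogether and argues directly on the homology side. Since $M$ is artinian, one stabilises $K=I^mM$ so that $IK=K$; the exact sequence $0\to K\to M\to M/K\to 0$ together with Theorem~\ref{T:dkhdddfmartin} and Lemma~\ref{L:dfdcdcpitddfmb0cd} (for the $I$--separated quotient $M/K$) reduces to showing $\mathfrak{F}^I_i(K)/x\mathfrak{F}^I_i(K)$ is noetherian. Because $I=(x)$ and $xK=K$, the sequence $0\to 0:_Kx\to K\xrightarrow{x}K\to 0$ is exact, and $0:_Kx$ is $I$--separated, so $\mathfrak{F}^I_j(0:_Kx)$ is noetherian over $\widehat R$ for all $j$ by Lemma~\ref{L:dfdcdcpitddfmb0cd}. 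The long exact sequence in $\mathfrak{F}^I_\bullet$ then exhibits $\mathfrak{F}^I_i(K)/x\mathfrak{F}^I_i(K)$ as a submodule of $\mathfrak{F}^I_{i-1}(0:_Kx)$. If you want to rescue the duality route, the honest target on the cohomology side is to show $0:_{\mathfrak{F}_I^i(N)}x$ is artinian for finitely generated $N$; this can be done by the dual of the paper's argument, but it is not the statement you sketched.
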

\begin{proof} Assume that $I$ is generated by the element $x.$
 As $M$ is artinian, there is a positive integer $m$ such
that $x^tM = x^mM$ for all $t \geq  m.$ Set $K = x^mM,$ then the
short exact sequence of artinian $R-$modules $$0 \longrightarrow K
\overset{f}\longrightarrow M \overset{g}\longrightarrow  M/K
\longrightarrow 0$$ gives rise to a long exact sequence of formal
local homology modules
$$... \longrightarrow \mathfrak{F}^I_{i+1}(M/K)\overset{\delta_{i+1}}\longrightarrow \mathfrak{F}^I_{i}(K)\overset{f_i}\longrightarrow
\mathfrak{F}^I_{i}(M) \overset{g_i}\longrightarrow
\mathfrak{F}^I_{i}(M/K)\longrightarrow ....$$Then we have short
exact sequences
$$0\longrightarrow \Im f_i\longrightarrow  \mathfrak{F}^I_{i}(M) \longrightarrow \Im g_i \longrightarrow 0,$$
$$0\longrightarrow \Im {\delta_{i+1}}\longrightarrow  \mathfrak{F}^I_{i}(K) \longrightarrow \Im {f_i} \longrightarrow 0.$$
These short exact sequences induce the following exact sequences
$$ \Im f_i/I\Im f_i\longrightarrow  \mathfrak{F}^I_{i}(M)/I\mathfrak{F}^I_{i}(M) \longrightarrow \Im g_i/I\Im g_i\longrightarrow 0,$$
$$\Im {\delta_{i+1}}/I\Im {\delta_{i+1}}\longrightarrow  \mathfrak{F}^I_{i}(K)/I\mathfrak{F}^I_{i}(K) \longrightarrow \Im f_i/I\Im f_i
\longrightarrow 0.$$ It should be mentioned that $M/K$ is
$I-$separated. By \ref{L:dfdcdcpitddfmb0cd},
$\mathfrak{F}^I_{i}(M/K)$ is a noetherian $\widehat{R}-$module and
then $\Im g_i/I\Im g_i$ is a noetherian $\widehat{R}-$module for all
$i.$ Thus, the proof is complete by showing that
$\mathfrak{F}^I_{i}(K)/I\mathfrak{F}^I_{i}(K)$ is a noetherian
$\widehat{R}-$module for all $i.$ As $xK = K,$ there is a short
exact sequence
$$0\longrightarrow 0:_Kx \longrightarrow K \overset{x}\longrightarrow K \longrightarrow
0.$$It gives rise to a long exact sequence $$... \longrightarrow
 \mathfrak{F}^I_{i}(0:_Kx)\longrightarrow
\mathfrak{F}^I_{i}(K)\overset{x}\longrightarrow
\mathfrak{F}^I_{i}(K)\longrightarrow
\mathfrak{F}^I_{i-1}(0:_Kx)\longrightarrow....$$Note that $0:_Kx$ is
$I-$separated, then $\mathfrak{F}^I_{i}(0:_Kx)$ is a noetherian
$\widehat{R}-$module for all $i$ by \ref{L:dfdcdcpitddfmb0cd}. It
follows from the long exact sequence that
$\mathfrak{F}^I_{i}(K)/x\mathfrak{F}^I_{i}(K)$ is a noetherian
$\widehat{R}-$module for all $i.$ Therefore
$\mathfrak{F}^I_{i}(K)/I\mathfrak{F}^I_{i}(K)$ is a noetherian
$\widehat{R}-$module for all $i$ and the proof is complete.
\end{proof}

The following theorem shows  other conditions  for the
$\widehat{R}-$module $\mathfrak{F}^I_{i}(M)/I\mathfrak{F}^I_{i}(M)$
being noetherian.

\begin{theorem}\label{L:khacdfmhhshomt}
Let  $M$ be  an artinian $R-$module and $s$ a non-negative integer.
If $\mathfrak{F}^I_{i}(M)$ is  a noetherian $\widehat{R}-$module for
all $i<s,$ then

\noindent$\mathfrak{F}^I_{s}(M)/I\mathfrak{F}^I_{s}(M)$ is a
noetherian $\widehat{R}-$module.
\end{theorem}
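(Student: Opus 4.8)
The plan is to follow the same reduction-and-induction strategy used in Theorems \ref{L:dktthhsrdcddfm} and \ref{L:dfmhhshomt}. First I would pass to the complete ring, using \ref{R:lydnfm} (the linearly-compact version) together with the fact that an artinian $R$-module is an artinian $\widehat R$-module, so we may assume $(R,\m)$ is complete. Since $M$ is artinian there is a positive integer $m$ with $I^tM = I^mM$ for all $t\geq m$; set $K=I^mM$. The short exact sequence $0\to K\to M\to M/K\to 0$ of artinian modules yields, via \ref{T:dkhdddfmartin}, a long exact sequence of formal local homology modules. Because $M/K$ is $I$-separated, \ref{C:dbatcpitddfmb0} gives $\mathfrak{F}^I_{i}(M/K)=0$ for $i>0$ and $\mathfrak{F}^I_0(M/K)\cong M/K$; hence $\mathfrak{F}^I_{i}(K)\cong \mathfrak{F}^I_{i}(M)$ for all $i>0$ and there is a surjection-type exact sequence relating the degree-$0$ pieces. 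In particular, from the hypothesis, $\mathfrak{F}^I_{i}(K)$ is a noetherian $\widehat R$-module for all $i<s$, and it suffices to prove $\mathfrak{F}^I_{s}(K)/I\mathfrak{F}^I_{s}(K)$ is noetherian (the $M/K$ contribution being harmless since $M/K$ is finite length).

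Next, since $IK=K$, by \cite[2.8]{macsec} there is $x\in I$ with $xK=K$. I would then use the short exact sequence $0\to 0:_K x\to K\xrightarrow{x} K\to 0$, which by \ref{T:dkhdddfmartin} induces a long exact sequence
$$\cdots \to \mathfrak{F}^I_{i}(0:_K x)\to \mathfrak{F}^I_{i}(K)\xrightarrow{x}\mathfrak{F}^I_{i}(K)\to \mathfrak{F}^I_{i-1}(0:_K x)\to \cdots.$$
Noting $\Ndim(0:_K x)\leq \Ndim K - 1$ (by the behaviour of Noetherian dimension under multiplication, cf. \cite[3.7]{cuoalo}), I would run a secondary induction on $\Ndim M$, or rather use the already-established Theorem \ref{L:dktthhsrdcddfm}/\ref{L:lhsdktthhsrdcddfm}-style bookkeeping: the hypothesis that $\mathfrak{F}^I_{i}(K)$ is noetherian for $i<s$ feeds, through the long exact sequence, into the corresponding statement for $0:_K x$ in lower degrees. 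The key extraction is that from $\mathfrak{F}^I_{i}(K)\xrightarrow{x}\mathfrak{F}^I_{i}(K)$ having cokernel inside $\mathfrak{F}^I_{i-1}(0:_K x)$, one controls $\mathfrak{F}^I_{s}(K)/x\mathfrak{F}^I_{s}(K)$ by $\mathfrak{F}^I_{s-1}(0:_K x)$, which by the inductive hypothesis (on dimension) is noetherian; and since $I$ is not principal in general, one still has $x\in I$, so $I\mathfrak{F}^I_{s}(K)\supseteq x\mathfrak{F}^I_{s}(K)$ and $\mathfrak{F}^I_{s}(K)/I\mathfrak{F}^I_{s}(K)$ is a quotient of the noetherian module $\mathfrak{F}^I_{s}(K)/x\mathfrak{F}^I_{s}(K)$, hence noetherian.

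The main obstacle I anticipate is organizing the induction correctly: unlike Theorem \ref{L:dfmhhshomt} where the principal ideal made every $\mathfrak{F}^I_i(0:_K x)$ directly noetherian via $I$-separatedness, here $0:_K x$ need not be $I$-separated, so one cannot invoke \ref{L:dfdcdcpitddfmb0cd} directly. Instead one must induct on $\Ndim M$, carefully propagating the hypothesis "$\mathfrak{F}^I_i(-)$ noetherian for $i<s$" down one dimension level at a time — checking that the long exact sequence coming from multiplication by $x$ transfers the noetherian condition from $\mathfrak{F}^I_{i}(K)$ ($i<s$) to $\mathfrak{F}^I_{i-1}(0:_K x)$ ($i<s$), which is exactly the hypothesis needed to apply the inductive case to $0:_K x$. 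One must also verify the base case $\Ndim M = 0$, where $\Ndim(0:_M I)=0$ forces $\mathfrak{F}^I_i(M)=0$ for $i>0$ (by Proposition \ref{P:mdcpitddfmb0} or Corollary \ref{C:hqdlkttdddpfm}), so for $s\geq 1$ the conclusion is trivial and for $s=0$ one checks $\mathfrak{F}^I_0(M)/I\mathfrak{F}^I_0(M)$ directly as in the $s=1$ base case of \ref{L:dktthhsrdcddfm}. Apart from this bookkeeping, every individual step is a routine application of results already established in the excerpt.
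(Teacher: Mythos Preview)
Your reduction to $K=I^mM$ and the use of $x\in I$ with $xK=K$ match the paper, but the inductive step has a genuine gap. You claim that $\mathfrak{F}^I_{s-1}(0:_Kx)$ is noetherian ``by the inductive hypothesis (on dimension),'' and then deduce that its submodule $\mathfrak{F}^I_{s}(K)/x\mathfrak{F}^I_{s}(K)$ is noetherian. But no form of the inductive hypothesis gives this: the statement being proved only asserts that the \emph{quotient} $\mathfrak{F}^I_{s}(-)/I\mathfrak{F}^I_{s}(-)$ is noetherian, never that the full module is. From the long exact sequence you correctly get $\mathfrak{F}^I_{j}(0:_Kx)$ noetherian for $j<s-1$ (sandwiched between $\mathfrak{F}^I_{j+1}(K)$ and $\mathfrak{F}^I_{j}(K)$), but at $j=s-1$ the left flank is $\mathfrak{F}^I_{s}(K)$, which is exactly the unknown. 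So neither a dimension induction nor an $s$-induction yields the full noetherianness of $\mathfrak{F}^I_{s-1}(0:_Kx)$.

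The paper inducts on $s$ (not on $\Ndim M$) and circumvents this obstacle by a $\Tor$ argument. From the long exact sequence one extracts
\[
0 \longrightarrow \mathfrak{F}^I_{s}(K)/x\mathfrak{F}^I_{s}(K) \longrightarrow \mathfrak{F}^I_{s-1}(0:_Kx) \longrightarrow 0:_{\mathfrak{F}^I_{s-1}(K)}x \longrightarrow 0,
\]
and then tensors with $R/I$. Since $x\in I$, the left term becomes $\mathfrak{F}^I_{s}(K)/I\mathfrak{F}^I_{s}(K)$, which now sits between ${\Tor}^R_1(R/I,\,0:_{\mathfrak{F}^I_{s-1}(K)}x)$ on the left and $\mathfrak{F}^I_{s-1}(0:_Kx)/I\mathfrak{F}^I_{s-1}(0:_Kx)$ on the right. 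The $\Tor$ term is noetherian because $0:_{\mathfrak{F}^I_{s-1}(K)}x$ is a submodule of the noetherian $\widehat R$-module $\mathfrak{F}^I_{s-1}(K)$; the right-hand quotient is noetherian by the inductive hypothesis on $s$ applied to $0:_Kx$ (whose hypotheses $\mathfrak{F}^I_{j}(0:_Kx)$ noetherian for $j<s-1$ you already verified). This $\Tor$ step is the missing ingredient in your outline; without it the argument does not close.
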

\begin{proof} We use induction on $s.$ When $s=0,$ it follows from
the proof of \ref{L:dktthhsrdcddfm} that $\mathfrak{F}^I_{0}(M)$ is
an artinian $R-$module. By \cite[2.3]{yascoa},
\begin{align*}
\Cosupp(\mathfrak{F}^I_{0}(M)/I\mathfrak{F}^I_{0}(M)) &= V(0:
\mathfrak{F}^I_{0}(M)/I\mathfrak{F}^I_{0}(M))\\
&= V(I+ (0: \mathfrak{F}^I_{0}(M)) \\
&= V(I)\bigcap V(0: \mathfrak{F}^I_{0}(M))\\
& = V(I)\bigcap \Cosupp(\mathfrak{F}^I_{0}(M)).
\end{align*}
It follows from \ref{L:cosddfmgvicvm} that
$\Cosupp(\mathfrak{F}^I_{0}(M)/I\mathfrak{F}^I_{0}(M))\subseteq
V(\m)$ and then $\mathfrak{F}^I_{0}(M)/I\mathfrak{F}^I_{0}(M)$ has
finite length.

Let $s>0.$  As $M$ is artinian, there is a positive integer $m$ such
that $I^tM = I^mM$ for all $t \geq  m.$ Set $K = I^mM,$ analysis
similar to that in the proof of \ref{L:dktthhsrdcddfm}  shows that
$$\mathfrak{F}^I_{i}(K)\cong \mathfrak{F}^I_{i}(M)$$ for all $i>0.$
Thus, The proof is completed by  showing that
$\mathfrak{F}^I_{s}(K)/I\mathfrak{F}^I_{s}(K)$ is a noetherian
$\widehat{R}-$module.  Since $IK = K,$ there is an element $x \in I$
such that $xK = K$ by \cite[2.8]{macsec}. Now the short exact
sequence
$$0\longrightarrow 0:_Kx \longrightarrow K \overset{x}\longrightarrow K \longrightarrow
0$$ gives rise to a long exact sequence $$...
\mathfrak{F}^I_{i}(K)\overset{x}\longrightarrow
\mathfrak{F}^I_{i}(K)\longrightarrow
\mathfrak{F}^I_{i-1}(0:_Kx)\longrightarrow
\mathfrak{F}^I_{i-1}(K)\overset{x}\longrightarrow
\mathfrak{F}^I_{i-1}(K)....$$ By the hypothesis,
$\mathfrak{F}^I_{i}(K)$ is  a noetherian $\widehat{R}-$module for
all $i<s.$ Then $\mathfrak{F}^I_{i}(0:_Kx)$ is  a noetherian
$\widehat{R}-$module for all $i<s-1.$ It follows from the inductive
hypothesis that
$\mathfrak{F}^I_{s-1}(0:_Kx)/I\mathfrak{F}^I_{s-1}(0:_Kx)$ is  a
noetherian $\widehat{R}-$module. From the long exact sequence we
have the following short exact sequence
$$0 \longrightarrow \mathfrak{F}^I_{s}(K)/x\mathfrak{F}^I_{s}(K) \longrightarrow \mathfrak{F}^I_{s-1}(0:_Kx)
\longrightarrow 0:_{\mathfrak{F}^I_{s-1}(K)}x \longrightarrow 0.$$
It induces an exact sequence {\small $${\Tor}_1^R(R/I;
0:_{\mathfrak{F}^I_{s-1}(K)}x) \longrightarrow
\mathfrak{F}^I_{s}(K)/I\mathfrak{F}^I_{s}(K)  \longrightarrow
\mathfrak{F}^I_{s-1}(0:_Kx)/I\mathfrak{F}^I_{s-1}(0:_Kx).$$} Since
$0:_{\mathfrak{F}^I_{s-1}(K)}x$ is  a noetherian
$\widehat{R}-$module, ${\Tor}_1^R(R/I;
0:_{\mathfrak{F}^I_{s-1}(K)}x)$ is also a noetherian
$\widehat{R}-$module. It follows that
$\mathfrak{F}^I_{s}(K)/I\mathfrak{F}^I_{s}(K)$ is a noetherian
$\widehat{R}-$module and the proof is complete.
\end{proof}

Let $M$ be  an artinian $R-$module. There is an other question: When
are the formal local homology modules $\mathfrak{F}^I_{i}(M)$
artinian? The following theorem gives us an answer when $i=\Ndim M.$

\begin{theorem}\label{T:ddfmnttchd}
Let  $M$ be  an artinian $R-$module with $\Ndim M = d.$ Then
$\mathfrak{F}^I_{d-1}(M)$ is an artinian $R-$module.
\end{theorem}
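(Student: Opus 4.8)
The plan is to reduce to a complete base ring and then induct on $d=\Ndim M$, using the two kinds of short exact sequences already exploited in Sections~2 and~3. By Corollary~\ref{C:hqlydnfm}, $\mathfrak{F}^I_{d-1}(M)\cong\mathfrak{F}^{I\widehat{R}}_{d-1}(M)$; since an artinian $R$-module is artinian over $\widehat{R}$ with the same submodule lattice (\cite[1.11]{shaame}), I may assume $(R,\m)$ complete. If $0:_MI=0$ then $\mathfrak{F}^I_i(M)=0$ for all $i$ by Corollary~\ref{C:hqddddflb0}, so assume $0:_MI\neq0$. When $d\le1$ the module $\mathfrak{F}^I_{d-1}(M)$ is $0$ or $\mathfrak{F}^I_0(M)$, and the computation in the proof of Theorem~\ref{L:dktthhsrdcddfm} exhibits $\mathfrak{F}^I_0(M)$ as a quotient of the artinian module $M$; this starts the induction.

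Assume $d\ge2$. Choose $m$ with $I^tM=I^mM$ for all $t\ge m$ and set $K=I^mM$, so $IK=K$ while $M/K$ is $I$-separated. Applying Theorem~\ref{T:dkhdddfmartin} to $0\to K\to M\to M/K\to0$ and Lemma~\ref{L:dfdcdcpitddfmb0cd} to $M/K$ as in the proof of Theorem~\ref{L:dktthhsrdcddfm}, one obtains $\mathfrak{F}^I_i(K)\cong\mathfrak{F}^I_i(M)$ for every $i>0$; in particular $\mathfrak{F}^I_{d-1}(M)\cong\mathfrak{F}^I_{d-1}(K)$, so it suffices to show that $\mathfrak{F}^I_{d-1}(K)$ is artinian for the $I$-divisible module $K$.

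For such $K$, by \cite[2.8]{macsec} there is $x\in I$ with $xK=K$, and $\Ndim(0:_Kx)\le d-1$ by \cite[3.7]{cuoalo}. The sequence $0\to 0:_Kx\to K\overset{x}\longrightarrow K\to0$ induces
$$\cdots\longrightarrow\mathfrak{F}^I_i(0:_Kx)\longrightarrow\mathfrak{F}^I_i(K)\overset{x}\longrightarrow\mathfrak{F}^I_i(K)\longrightarrow\mathfrak{F}^I_{i-1}(0:_Kx)\longrightarrow\cdots.$$
By Theorem~\ref{T:ddddflb0}, $\mathfrak{F}^I_i(K)=\Gamma_I(\mathfrak{F}^I_i(K))$, so $x\in I$ acts locally nilpotently on each $\mathfrak{F}^I_i(K)$. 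Since $\Ndim(0:_Kx)\le d-1$, the non-vanishing theorem (Corollary~\ref{C:hqdlkttdddpfm}) forces $\mathfrak{F}^I_d(0:_Kx)=0$, whence $\mathfrak{F}^I_d(K)\overset{x}\longrightarrow\mathfrak{F}^I_d(K)$ is injective; an injective locally nilpotent map is zero, so $\mathfrak{F}^I_d(K)=0$. The sequence then yields $0:_{\mathfrak{F}^I_{d-1}(K)}x\cong\mathfrak{F}^I_{d-1}(0:_Kx)$ and an embedding $\mathfrak{F}^I_{d-1}(K)/x\mathfrak{F}^I_{d-1}(K)\hookrightarrow\mathfrak{F}^I_{d-2}(0:_Kx)$. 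Applying the inductive hypothesis to the artinian module $0:_Kx$ (Noetherian dimension $\le d-1$) to see that $\mathfrak{F}^I_{d-1}(0:_Kx)$ and $\mathfrak{F}^I_{d-2}(0:_Kx)$ are artinian, one concludes that the $I$-torsion module $\mathfrak{F}^I_{d-1}(K)$ has artinian $x$-socle and $x$-cosocle, and then that it is artinian, completing the induction.

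I expect the main obstacles to be the following. First, the reduction in the second paragraph uses $\mathfrak{F}^I_i(M/K)=0$ for $i>0$, whereas the stated results (Lemma~\ref{L:dfdcdcpitddfmb0cd}) give only that these modules are finitely generated over $\widehat{R}$; one must therefore either show the degree-$(d-1)$ contribution of $M/K$ has finite length, or replace the step by a more careful diagram chase carrying along a noetherian error term, and then argue that this term does not survive in $\mathfrak{F}^I_{d-1}(M)$. Second, deducing that the $I$-torsion module $\mathfrak{F}^I_{d-1}(K)$ is artinian from the artinianness of $0:_{\mathfrak{F}^I_{d-1}(K)}x$ and of $\mathfrak{F}^I_{d-1}(K)/x\mathfrak{F}^I_{d-1}(K)$ is not formal, since an ascending union of artinian modules need not be artinian; one has to exploit extra structure — the $\Cosupp$ restriction of Lemma~\ref{L:cosddfmgvicvm}, the artinianness of $0:_Kx^n$ for each $n$, and the behaviour of $\mathfrak{F}^I_\bullet$ along the chain $0:_Kx\subseteq0:_Kx^2\subseteq\cdots$ — to rule out an infinite ascending chain. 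This second point is, I expect, where the real difficulty lies.
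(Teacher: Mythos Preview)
Your outline is the paper's proof: induct on $d=\Ndim M$, pass to the $I$-divisible submodule $K=I^mM$, pick $x\in I$ with $xK=K$ via \cite[2.8]{macsec}, and read off information about $\mathfrak{F}^I_{d-1}(K)$ from the long exact sequence attached to $0\to 0:_Kx\to K\overset{x}\to K\to 0$.

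The decisive point you flag as your second obstacle is exactly where the paper brings in the tool you are missing: Melkersson's criterion \cite[1.3]{melona}. It says that an $I$-torsion module $N$ (one with $\Gamma_I(N)=N$) is artinian as soon as $0:_Nx$ is artinian for some $x\in I$; the reason is that for an $x$-torsion module the inclusion $0:_Nx\hookrightarrow N$ is essential, so $N$ embeds in the artinian injective hull of $0:_Nx$. The paper therefore uses only that $0:_{\mathfrak{F}^I_{d-1}(K)}x$ is artinian --- and this is a quotient of $\mathfrak{F}^I_{d-1}(0:_Kx)$, handled by the inductive hypothesis (with the non-vanishing bound of Corollary~\ref{C:hqdlkttdddpfm} covering the degenerate case $\Ndim(0:_Kx)<d-1$). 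Combined with $\Gamma_I(\mathfrak{F}^I_{d-1}(K))=\mathfrak{F}^I_{d-1}(K)$ from Theorem~\ref{T:ddddflb0}, Melkersson's criterion finishes the argument. Your detour through the vanishing of $\mathfrak{F}^I_d(K)$ and the artinianness of the $x$-cosocle is unnecessary, and the difficulty you anticipate there does not arise.

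On your first obstacle, the paper (already in the proof of Theorem~\ref{L:dktthhsrdcddfm}) asserts $\mathfrak{F}^I_i(M/K)=0$ for $i>0$ by citing Corollary~\ref{C:dbatcpitddfmb0}, rather than merely the finite-generation statement of Lemma~\ref{L:dfdcdcpitddfmb0cd}; so no noetherian error term is carried. Your hesitation is not unreasonable --- that corollary's hypothesis is $\m$-separation rather than $I$-separation --- but as far as the comparison with the paper goes, it treats this reduction as immediate.
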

\begin{proof}
We prove by induction on $d = \Ndim M.$ When $d=0,$  It follows from
\ref{P:mdcpitddfmb0}
 that
$\mathfrak{F}^I_{0}(M)\cong M.$  Then $\mathfrak{F}^I_{0}(M)$ is an
artinian $R-$module.

Let $d>0.$  As $M$ is artinian, there is a positive integer $m$ such
that $I^tM = I^mM$ for all $t \geq  m.$ Set $K = I^mM,$ analysis
similar to that in the proof of \ref{L:dktthhsrdcddfm}  shows that
$$\mathfrak{F}^I_{i}(K)\cong \mathfrak{F}^I_{i}(M)$$ for all $i>0.$
Thus, The proof is completed by showing that $\mathfrak{F}^I_{d}(K)$
is an artinian $R-$module. Since $IK = K,$ there is an element $x
\in I$ such that $xK = K$ by \cite[2.8]{macsec}. Now the short exact
sequence
$$0\longrightarrow 0:_Kx \longrightarrow K \overset{x}\longrightarrow K \longrightarrow
0.$$It induces an exact sequence
$$\mathfrak{F}^I_{d-1}(0:_Kx) \longrightarrow \mathfrak{F}^I_{d-1}(K)\overset{x}\longrightarrow
\mathfrak{F}^I_{d-1}(K).$$ It should be noted by \cite[3.7]{cuoalo}
that $\Ndim(0:_K x)\leq d-1.$ Then $\mathfrak{F}^I_{d-1}(0:_Kx)$ is
an artinian $R-$module by the inductive hypothesis. From the last
exact sequence, $0:_{\mathfrak{F}^I_{d-1}(K)} x$ is an artinian
$R-$module. Moreover, $\Gamma(\mathfrak{F}^I_{d-1}(K)) =
\mathfrak{F}^I_{d-1}(K)$ by \ref{T:ddddflb0}. It follows from
\cite[1.3]{melona}  that $\mathfrak{F}^I_{d-1}(K)$ is an artinian
$R-$module and the proof is complete.
\end{proof}

The following theorem gives us the  equivalent properties for formal
local homology modules $\mathfrak{F}^I_{i}(M)$ being finitely
artinian for all $i>s.$

\begin{theorem}\label{L:lh0lhsdktthhsrdcddfm}
Let $M$ be an artinian $R-$module and $s$ a non-negative integer.
Then the following statements are equivalent:
\begin{itemize}
\item[(i)] $\mathfrak{F}^I_{i}(M)$ is artinian for all
$i>s;$
\item[(ii)] $\mathfrak{F}^I_{i}(M) = 0$ for all
$i>s;$
\item[(iii)] $\Ass(\mathfrak{F}^I_{i}(M))\subseteq \{\m\}$  for all
$i>s.$
\end{itemize}
\end{theorem}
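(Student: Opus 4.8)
The plan is to prove the chain of implications $(i) \Rightarrow (iii) \Rightarrow (ii) \Rightarrow (i)$, with the last implication being essentially trivial. The implications $(ii) \Rightarrow (i)$ and $(ii) \Rightarrow (iii)$ are immediate since the zero module is artinian and has empty set of associated primes, so the real content lies in $(i) \Rightarrow (iii)$ and $(iii) \Rightarrow (ii)$.

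For $(i) \Rightarrow (iii)$, suppose $\mathfrak{F}^I_i(M)$ is artinian for all $i > s$. By Theorem \ref{T:ddddflb0} we have $\mathfrak{F}^I_i(M) = \Gamma_I(\mathfrak{F}^I_i(M))$, so every element is annihilated by a power of $I$; hence $\Ass(\mathfrak{F}^I_i(M)) \subseteq V(I)$. On the other hand, an artinian $R$-module has all its associated primes equal to $\m$ — indeed, by Lemma \ref{L:cosddfmgvicvm} and its proof, $\Cosupp_R(\mathfrak{F}^I_i(M)) \cap V(I) \subseteq V(\m)$, and for an artinian module $\Cosupp_R(\mathfrak{F}^I_i(M)) = V(0 : \mathfrak{F}^I_i(M))$ by \cite[2.3]{yascoa}. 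Combining these forces $V(0 : \mathfrak{F}^I_i(M)) \subseteq V(\m)$, so $\mathfrak{F}^I_i(M)$ has finite length and $\Ass(\mathfrak{F}^I_i(M)) \subseteq \{\m\}$ for all $i > s$.

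The main work is $(iii) \Rightarrow (ii)$, which I would prove by induction on $d = \Ndim M$. When $d = 0$ we have $\Ndim(0 :_M I) = 0$, so by Proposition \ref{P:mdcpitddfmb0} (or \ref{L:dlkttdddpfm}) $\mathfrak{F}^I_i(M) = 0$ for all $i > 0$, and in particular for $i > s$. For $d > 0$, as in the proofs of Theorems \ref{L:dktthhsrdcddfm} and \ref{L:lhsdktthhsrdcddfm}, choose $m$ with $I^t M = I^m M$ for all $t \geq m$, set $K = I^m M$, and reduce via the long exact sequence of \ref{T:dkhdddfmartin} together with \ref{C:dbatcpitddfmb0} to showing $\mathfrak{F}^I_i(K) = 0$ for all $i > s$. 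Since $IK = K$, pick $x \in I$ with $xK = K$ by \cite[2.8]{macsec}; the short exact sequence $0 \to 0 :_K x \to K \xrightarrow{x} K \to 0$ gives, for each $i$, an exact piece
$$\mathfrak{F}^I_i(0 :_K x) \longrightarrow \mathfrak{F}^I_i(K) \xrightarrow{\ x\ } \mathfrak{F}^I_i(K) \longrightarrow \mathfrak{F}^I_{i-1}(0 :_K x).$$
Because $\Ndim(0 :_K x) \leq d - 1$ by \cite[3.7]{cuoalo}, and since $\Ass(\mathfrak{F}^I_i(0 :_K x)) \subseteq \{\m\}$ follows from $\Ass(\mathfrak{F}^I_i(K)) = \Ass(\mathfrak{F}^I_i(M)) \subseteq \{\m\}$ via the exact sequence above (for $i-1 > s$, i.e. the induced bound shifts appropriately), the inductive hypothesis yields $\mathfrak{F}^I_i(0 :_K x) = 0$ for all $i > s$. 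Then the displayed sequence shows multiplication by $x$ is surjective on $\mathfrak{F}^I_i(K)$ for $i > s$, i.e. $x \mathfrak{F}^I_i(K) = \mathfrak{F}^I_i(K)$, whence $I \mathfrak{F}^I_i(K) = \mathfrak{F}^I_i(K)$. But $\mathfrak{F}^I_i(K) = \Gamma_I(\mathfrak{F}^I_i(K))$ by \ref{T:ddddflb0}, so every element is killed by a power of $I$; an $I$-torsion module equal to its own $I$-multiple must vanish (if $0 \neq y$ with $I^n y = 0$, take $n$ minimal; then $y \in I\mathfrak{F}^I_i(K)$ forces $y = \sum a_j z_j$ with $a_j \in I$, and $I^{n-1} y = 0$ by an elementary argument, contradicting minimality). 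Hence $\mathfrak{F}^I_i(K) = 0$ for all $i > s$, completing the induction.

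The subtle point I expect to be the main obstacle is the bookkeeping in the inductive step: one must check that the hypothesis $\Ass(\mathfrak{F}^I_i(-)) \subseteq \{\m\}$ for $i > s$ transfers correctly to $0 :_K x$ at the shifted degree, and that the reduction from $M$ to $K$ preserves both the dimension drop and the associated-primes condition. This requires carefully tracking the long exact sequence indices and invoking the additivity of $\Ndim$ on short exact sequences together with \cite[3.7]{cuoalo}. Once the degree shift is handled consistently, the argument that an $I$-torsion module satisfying $IN = N$ is zero is the clean conceptual core that makes everything collapse to $(ii)$.
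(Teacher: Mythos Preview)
Your cycle $(i)\Rightarrow(iii)\Rightarrow(ii)\Rightarrow(i)$ and the inductive skeleton for $(iii)\Rightarrow(ii)$ are sound, and in fact this route is slightly more economical than the paper's, which proves $(i)\Rightarrow(ii)\Rightarrow(iii)\Rightarrow(i)$ and needs Melkersson's criterion \cite[1.3]{melona} for the step $(iii)\Rightarrow(i)$. But the step you single out as ``the clean conceptual core'' is wrong on two counts.

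First, vanishing of $\mathfrak{F}^I_i(0:_K x)$ for $i>s$ in the sequence
\[
\mathfrak{F}^I_i(0:_K x)\longrightarrow\mathfrak{F}^I_i(K)\xrightarrow{\ x\ }\mathfrak{F}^I_i(K)\longrightarrow\mathfrak{F}^I_{i-1}(0:_K x)
\]
gives \emph{injectivity} of $x$, not surjectivity; surjectivity would need the right-hand term to vanish, which you do not know at $i=s+1$. Second, even granting $I\mathfrak{F}^I_i(K)=\mathfrak{F}^I_i(K)$, the claim that an $I$-torsion module $N$ with $IN=N$ must be zero is false: over a DVR $(R,(t))$ the module $N=E(R/(t))$ is $t$-torsion, satisfies $tN=N$, and is nonzero. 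Your ``elementary argument'' breaks because writing $y=\sum a_j z_j$ with $a_j\in I$ gives no bound on the $I$-torsion exponents of the $z_j$.

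The fix is immediate once you use injectivity instead: from $0:_{\mathfrak{F}^I_i(K)}x=0$ and $x\in I$ you get $0:_{\mathfrak{F}^I_i(K)}I=0$, and then Corollary~\ref{C:hqddddflb0} forces $\mathfrak{F}^I_i(K)=0$ for all $i>s$. With this correction your $(iii)\Rightarrow(ii)$ is complete. For comparison, the paper's $(i)\Rightarrow(ii)$ also arrives at injectivity of $x$ but concludes via the fact that a nonzero artinian module over $(R,\m)$ has nonzero socle, while its $(iii)\Rightarrow(i)$ shows $0:_{\mathfrak{F}^I_i(K)}x$ is artinian and invokes \cite[1.3]{melona}; your corrected argument bypasses both and reaches $(ii)$ directly from $(iii)$.
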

\begin{proof}
$(i)\Rightarrow (ii).$ We use induction on $d=\Ndim M.$ If $d=0,$
then $\Ndim(0:_MI)=0.$ By \ref{C:hqdlkttdddpfm},
$\mathfrak{F}^I_{i}(M) = 0$ for all $i>0.$

Let $d>0.$ As $M$ is artinian, there is a positive integer $m$ such
that $I^tM = I^mM$ for all $t \geq  m.$ Set $K = I^mM,$ analysis
similar to that in the proof of \ref{L:dktthhsrdcddfm}  shows that
$$\mathfrak{F}^I_{i}(K)\cong \mathfrak{F}^I_{i}(M)$$ for all $i>0.$
Thus, the proof will be complete if we show that
$\mathfrak{F}^I_{i}(K) = 0$ for all $i>s.$ By the hypothesis,
$\mathfrak{F}^I_{i}(K)$ is artinian for all $i>s.$ As $IK = K,$
there is an element $x \in I$ such that $xK = K$ by
\cite[2.8]{macsec}.  Now the short exact sequence $0 \longrightarrow
0:_K x \longrightarrow K \overset{x}\longrightarrow K
\longrightarrow 0$ gives rise to a short exact sequence of formal
local homology modules
$$...\longrightarrow \mathfrak{F}^I_{i+1}(K)\longrightarrow \mathfrak{F}^I_{i}(0:_K x)\longrightarrow \mathfrak{F}^I_{i}(K)\overset{.x}\longrightarrow \mathfrak{F}^I_{i}(K)\longrightarrow....$$
By \cite[4.7]{cuoalo} $\Ndim(0:_Kx)\leq d-1.$ Then the inductive
hypothesis gives $\mathfrak{F}^I_{i}(0:_K x) = 0$ for all $i > s$
and we have an exact sequence
$$0 \longrightarrow \mathfrak{F}^I_{i}(K)\overset{.x}\longrightarrow \mathfrak{F}^I_{i}(K)$$
for all $i>s.$ It follows that $0:_{\mathfrak{F}^I_{i}(K)} x = 0$
for all $i>s.$ Since $\mathfrak{F}^I_{i}(K)$ is artinian for all
$i>s,$ we conclude that $\mathfrak{F}^I_{i}(K)=0$ for all $i>s.$

$(ii)\Rightarrow (iii)$ is trivial.

$(iii)\Rightarrow (i).$ We use induction on $d=Ndim M$ and the
argument is similar to that used in the proof of $(i)\Rightarrow
(ii).$ If $d=0,$ $\mathfrak{F}^I_{i}(M) = 0$ for all $i>0.$ Let
$d>0.$ As $M$ is artinian, there is a positive integer $m$ such that
$I^tM = I^mM$ for all $t \geq  m.$ Set $K = I^mM,$ we have
$$\mathfrak{F}^I_{i}(K)\cong \mathfrak{F}^I_{i}(M)$$ for all $i>0$
and the proof will be complete if we show that
$\mathfrak{F}^I_{i}(K)$ is artinian for all $i>s.$  Now, there is an
element $x \in I$ such that $xK = K$ and the short exact sequence $0
\longrightarrow 0:_K x \longrightarrow K \overset{x}\longrightarrow
K \longrightarrow 0$ gives rise to a long exact sequence of formal
local homology modules
$$...\longrightarrow \mathfrak{F}^I_{i+1}(K)\longrightarrow \mathfrak{F}^I_{i}(0:_K x)\longrightarrow \mathfrak{F}^I_{i}(K)\overset{.x}\longrightarrow \mathfrak{F}^I_{i}(K)\longrightarrow....$$
By \cite[4.7]{cuoalo} $\Ndim(0:_Kx)\leq d-1.$ On the other hand,
$\Ass(\mathfrak{F}^I_{i}(K))\subseteq \{\m\}$ for all $i>s$ by  the
hypothesis. From the exact sequence we have
$\Ass(\mathfrak{F}^I_{i}(0:_K x))\subseteq \{\m\}$ for all $i>s.$
Then   $\mathfrak{F}^I_{i}(0:_K x)$ is artinian for all $i>s$ by the
inductive hypothesis. Therefore $0:_{\mathfrak{F}^I_{i}(K)} x$ is
artinian for all $i>s.$ Moreover, $\Gamma_I(\mathfrak{F}^I_{i}(K)) =
\mathfrak{F}^I_{i}(K)$ by \ref{T:ddddflb0}. From \cite[1.3]{melona}
we conclude that
  $\mathfrak{F}^I_{i}(K)$ is artinian for all
$i>s$ and this finishes the proof.
\end{proof}

\end{document}